\begin{document}
%
\title{Distributed Event-Triggered Algorithm for Convex Optimization with Coupled Constraints}
%


\author{Yi~Huang, Xianlin~Zeng, Ziyang Meng, \IEEEmembership{Senior Member, IEEE}, and Jian~Sun, 	\IEEEmembership{Senior Member, IEEE} 
\thanks{This work has been supported in part by the National
Natural Science Foundation of China under Grants 62103223, 62073035, 61833009 and U19B2029.

Yi Huang is with the School of Automation, Beijing Institute of Technology, Beijing 100081, China (e-mail: yihuang@bit.edu.cn).

Xianlin Zeng is with the State Key Laboratory of Intelligent Control and Decision of Complex Systems, School of Automation, Beijing
Institute of Technology, 100081, Beijing, China (xianlin.zeng@bit.edu.cn).

Ziyang Meng is with the Department of Precision
Instrument, Tsinghua University, Beijing 100084, China (e-mail: ziyangmeng@mail.tsinghua.edu.cn).

Jian Sun is  with the State Key Laboratory of Intelligent Control and Decision of Complex Systems and the School of Automation, Beijing Institute of Technology, Beijing 100081, China, and also with the Chongqing Innovation Center, Beijing Institute of Technology, Chongqing 401120, China (e-mail: sunjian@bit.edu.cn).

}}
\maketitle

\begin{abstract}
This paper develops a distributed primal-dual algorithm via event-triggered mechanism to solve a class of convex optimization problems subject to local set constraints, coupled equality and inequality constraints. Different from some existing distributed algorithms with the diminishing step-sizes, our algorithm uses the constant step-sizes, and is shown to achieve an exact convergence to an  optimal solution with an $O(1/k)$ convergence rate for general convex objective functions, where $k>0$ is the iteration number. Moreover, by applying event-triggered communication mechanism, the proposed algorithm can effectively reduce the communication cost without sacrificing the convergence rate. Finally, a numerical example is presented to verify the effectiveness of the proposed algorithm.
\end{abstract}

\begin{IEEEkeywords}
Distributed optimization, event-triggered communication, constant step-sizes, coupled constraints.
\end{IEEEkeywords}

\newtheorem{Assumption}{Assumption}
 \newtheorem{Remark}{Remark}
 \newtheorem{Lemma}{Lemma}
 \newtheorem{Definition}{Definition}
  \newtheorem{Proposition}{Proposition}
 \newtheorem{Theorem}{Theorem}
 \newtheorem{Property}{Property}
 \newtheorem{Corollary}{Corollary}
 \newtheorem{Example}{Example}

\section{Introduction}
Distributed optimization has attracted considerable attention due to its wide applications in machine learning, multi-robot localization and sensor networks and \cite{a2,a3,a4,a5}. Many works (e.g., \cite{a6,a8,a9,a10,a11,a12}) considered the optimal consensus problem, in which the global objective function is a sum of local objective ones with a common variable. In contrast to the optimal consensus problem, we consider an another distributed optimization problem with coupled constraints, in which each agent has its own objective function and local decision variable while all the agents' local decision variables are coupled with the global equality or inequality constraint. This optimization problem arises in some practical applications (e.g. economic dispatch and flow control in smart grids \cite{1,2,3,4,3a}).

To deal with the coupled constraints, many distributed algorithms have been proposed in \cite{1,2,3,3a,4,5,6,7,8,8a,9,10}. Note that affine coupled equality or inequality constraints were considered in \cite{1,2,3,3a,4,5}, and it is more challenging that the coupled inequality constraint has a nonaffine structure. In \cite{6}, a consensus-based primal-dual perturbation algorithm was proposed to solve the optimization problem with coupled nonlinear inequality constraints. The relaxation and duality-based distributed algorithm of \cite{7}, and dual decomposition distributed algorithms of \cite{8,8a} were developed to deal with the coupled nonlinear inequality constraints. Note that \cite{8a} only converges to a suboptimal solution, and \cite{6,7,8} use the diminishing step-sizes with only asymptotic convergence. The local set constraints, coupled affine equality and nonlinear inequality constraints are both considered in \cite{9} and \cite{10}, and two distributed optimization algorithms were developed. \cite{9} only proved asymptotic convergence and the analysis of the convergence rate was not given, and \cite{10} was shown to achieve an $O(\frac{\text{ln} k}{\sqrt{k}})$ convergence rate with the step-size $1/\sqrt{k+1}$.

Note that most distributed algorithms mentioned above use the periodic communication mechanism, i.e., each agent needs to communicate with its neighbors at each sampling time or iteration instant. If the sampling time or iteration step-size is small, the algorithms via periodic communication lead to high communication cost. In contrast with periodic communication one, event-triggered mechanism is a more communication-effective approach, in which each agent only communicates with the neighbors at the event-triggering times determined by some triggering rules. Based on this attractive property,  \cite{11,12,13} proposed some event-triggered distributed optimization algorithms. Event though the communication cost can be reduced in \cite{11,12,13}, the convergence rates are compromised since a diminishing step-size is used. To accelerate the convergence rate, the authors of \cite{14} and \cite{15} proposed two event-based distributed optimization algorithms with constant step-sizes, in which \cite{14} only guarantees the convergence to a suboptimal solution, and \cite{15} achieves an exact convergence but cannot be applied directly to the coupled nonlinear constrained problem.

Inspired by the above discussions, the objective of this paper is to design a distributed event-triggered algorithm with constant step-sizes to solve a class of convex optimization problems with local set constraints, coupled affine equality and nonlinear inequality constraints. Compared with the related results, the main contributions of this paper are three-fold:
%

\begin{enumerate}
  \item [\textbf{c1)}] Based on the primal-dual method and event-triggered mechanism, we develop a novel distributed algorithm with constant step-sizes to solve the optimization problem with local set constraints, coupled equality and inequality constraints, and provide the explicit selection criteria of constant step-sizes.
  \item [\textbf{c2)}] The proposed algorithm is shown to achieve an exact convergence to an optimal solution with an $O(1/k)$ convergence rate for general convex objective functions. The convergence results of our algorithm outperform those of \cite{6,7,8,8a,9,10}. In particular, a suboptimal solution is obtained in \cite{8a}, and only asymptotic convergence is achieved in \cite{6,7,8} and \cite{9}, while the convergence rate of $O(\frac{\text{ln} k}{\sqrt{k}})$ in \cite{10} is slower than $O(1/k)$ of our proposed algorithm.
  \item [\textbf{c3)}] Compared with the distributed algorithms in \cite{6,7,8,8a,9,10} with periodic communication, the proposed algorithm via event-triggered mechanism can effectively reduce the communication cost. Moreover, in contrast to some event-triggered algorithms via the diminishing step-sizes in \cite{11,12,13} where the convergence rates are compromised, the proposed event-triggered algorithm via the constant step-sizes does not sacrifice the convergence rate.
\end{enumerate}

The rest of this paper is organized as follows. Section II provides some preliminaries and formulates the problem. Section III presents a distributed event-triggered optimization algorithm and its convergence analysis. Section IV gives a simulation example and Section V draws some conclusions.

\section{Preliminaries and formulation}
Notation: Let $\mathbb{R}_{\ge 0}$, $\mathbb{R}_{\le 0}$ and $\mathbb{R}$ be the sets of nonnegative, nonpositive and real numbers. $\mathbb{N}$ denotes the set of natural numbers. Let $I_{n}$ be an $n\times n$ identity matrix and $\bm 1_{n}$ be a $n$-dimensional vector with all entries being 1. $\Vert \cdot\Vert$ denotes the Euclidean norm of a vector or a matrix. Let $\text{col}(x_{i})^{n}_{i=1}$ be a column stack of a vector or a matrix $x_{i}$ over $i$ from $1$ to $n$, $\otimes$ represents the Kronecker product and $\lambda_{\max}(A)$ denotes the largest eigenvalues of matrix $A$. $\textbf{ker}(M)$ and $\textbf{Im}(M)$ denote the kernel space and image space of matrix $M$, respectively.

\subsection{Graph Theory}
A weighted undirected graph is described by $\mathcal{G}=(\mathcal{V},\mathcal{E}, A)$, where $\mathcal{V}=\{1,2,\ldots, N\}$ is the node set, and $\mathcal{E} \subset \mathcal{V}\times \mathcal{V}$ is the edge set, $A=[a_{ij}] \in \mathbb{R}^{N\times N}$ is an adjacency matrix with $a_{ij}>0$ if $(j, i) \in \mathcal{E}$, and $a_{ij}=0$ otherwise. $\mathcal{N}_{i}=\{j~|~ (j, i) \in \mathcal{E}\}$ denotes the neighbors of node $i\in \mathcal{V}$. $L=D-A \in \mathbb{R}^{N\times N}$ is the Laplacian matrix of $\mathcal{G}$, where $D=\text{diag}\{d_{1},d_{2}, \ldots, d_{N}\}$ and $d_{i}=\sum^{N}_{j=1}a_{ij}$. For an undirected graph $\mathcal{G}$, $L$ is a symmetric and positive semi-definite matrix, and its eigenvalues are ordered as $0\le \lambda_{2}(L)\le \ldots \le \lambda_{N}(L). $

\subsection{Convex property}
A differentiable function $f:\mathbb{R}^{m} \to \mathbb{R}$ is convex if $f(x_{1})-f(x_{2})\ge \nabla f^{T}(x_{2})(x_{1}-x_{2})$ for $\forall x_{1}, x_{2} \in \mathbb{R}^{m}$, where $\nabla f$ is the gradient of $f$. The gradient function $\nabla f$ is $\theta$-Lipschitz continuous with a positive constant $\theta$ if $\Vert \nabla f(x_{1})-\nabla f(x_{2})\Vert \le \theta \Vert x_{1}-x_{2}\Vert$ for $\forall x_{1}, x_{2}\in \mathbb{R}^{m}$. A set $\Omega$ is convex if for any $0<\mu<1$, $x, y\in \Omega$ implies that $\mu x+(1-\mu)y \in \Omega$. Given a closed and convex set $\Omega$, the projection of $x$ on $\Omega$ is denoted as $\mathcal{P}_{\Omega}(x)=\text{argmin}_{y\in \Omega} \Vert x-y\Vert$, which satisfies the property $(x-\mathcal{P}_{\Omega}(x))^{T}(y-\mathcal{P}_{\Omega}(x))\le 0, \forall x\in \mathbb{R}, y\in \Omega$. The norm cone of the set $\Omega$ at $x \in \Omega$ is defined as
\begin{align}\label{m1}
\mathcal{N}_{\Omega}(x)=\{w\in \mathbb{R}^{n}|~w^{T}(v-x)\le 0, \forall v\in \Omega \},
\end{align}
From Lemma 2.38 in \cite{15b}, it follows that $\mathcal{P}_{\Omega}(x+w)=x$ for any $w \in \mathcal{N}_{\Omega}(x)$.
%

\subsection{Problem formulation}
Consider a multi-agent network of $N$ agents over a graph $\mathcal{G}$, in which each agent $i\in \mathcal{V}$ privately has a local objective function $f_{i}(x_{i}): \mathbb{R}^{n}\to \mathbb{R}$ and local decision variable $x_{i}\in \mathbb{R}^{n_{i}}$. The objective is to cooperatively solve the following nonsmooth optimization problem
\begin{subequations}\label{1}
\begin{align}
&\min_{x\in \Omega} f(x)=\sum^{N}_{i=1}f_{i}(x_{i}) \\
&\text{s.t. }~~ g(x)=\sum^{N}_{i=1}g_{i}(x_{i})\le 0, h(x)=\sum^{N}_{i=1}h_{i}(x_{i})=0, \label{1.1}
\end{align}
\end{subequations}
where $x=\text{col}(x_{1},\ldots, x_{N})\in \mathbb{R}^{n}, n=\sum^{N}_{i=1}n_{i}$ is a collection of all the decision variables, $f(x): \mathbb{R}^{n}\to \mathbb{R}$ is the global objective function, $\Omega=\Omega_{1}\times \ldots \times \Omega_{N}$ is the Cartesinan product of local constraint $\Omega_{i}, i\in \mathcal{V}$, $g(x)=\sum^{N}_{i=1}g_{i}(x_{i})\le 0$ and $h(x)=\sum^{N}_{i=1}h_{i}(x_{i})=0$ are the coupled inequality and equality constraints, where $g_{i}(x_{i}): \mathbb{R}^{n_{i}} \to \mathbb{R}^{p}$ and $h_{i}(x_{i}): \mathbb{R}^{n_{i}} \to \mathbb{R}^{q}$ are the local constraint functions only accessible for each agent $i \in \mathcal{V}$. To proceed, some basic assumptions are given.

\begin{Assumption}
The graph $\mathcal{G}$ is undirected and connected.
\end{Assumption}

\begin{Assumption}
\begin{enumerate}
  \item [(i)] The local functions $f_{i}, g_{i}$ and $ h_{i}$ are all convex and differentiable on $\Omega_{i}$, and $\Omega_{i}$ is closed and convex for $\forall i\in \mathcal{V}$.
  \item [(ii)] $\nabla f_{i}(x_{i}), \nabla g_{i}(x_{i})$ and $g_{i}(x_{i})$ are Lipschitz continuous on $\Omega_{i}$, and $h_{i}(x_{i})$ is an affine function, i.e., $h_{i}(x_{i})=B_{i}x_{i}+b_{i}$ with $B_{i} \in \mathbb{R}^{q\times n_{i}}, b_{i} \in \mathbb{R}^{n_{i}}$.
  \item [(iii)] The Slater's condition is satisfied, i.e., there exists a point $\hat{x}\in \text{relint}(\Omega)$ such that $g(\hat{x})\le 0$ and $h(\hat{x})=0$, where $\text{relint}(\Omega)$ is the relative interior of  $\Omega$.
\end{enumerate}
\end{Assumption}
\begin{Remark}
The optimization problem \eqref{1} is formulated in a general form, which includes many existing ones in \cite{1,2,3,3a,4,5,6,7,8,8a} as a special one. In particular, if the coupled affine equality constraint is specified as $(L\otimes I)x=0$, the optimization problem \eqref{1} can be transformed into the optimal consensus problem. In addition, we require the local objective function $f_{i}$ to be just convex rather than strongly convex in \cite{2,3,4,3a}, and relax the compactness requirement of $\Omega_{i}$ in \cite{6,7,8}.
\end{Remark}
For notational simplicity, we define
\begin{align*}
\psi_{i}(x_{i})=\begin{bmatrix} g_{i}(x_{i})\\ h_{i}(x_{i})\end{bmatrix} \in \mathbb{R}^{m}, \Upsilon=\mathbb{R}^{p}_{\le 0}\times {0_{q}}, \Theta=\mathbb{R}^{p}_{\ge 0}\times \mathbb{R}^{q}
\end{align*}
with $m=p+q$. Then, the coupled equality and inequality constraints of \eqref{1} can be written as
\begin{align}\label{2}
\psi(x)=\sum^{N}_{i=1} \psi_{i}(x_{i}) \in \Upsilon.
\end{align}
For problem \eqref{1}, the Lagrangian function is obtained as $L(x, \lambda)=f(x)+\lambda^{T}\psi(x)=\sum^{N}_{i=1}f_{i}(x_{i})+\sum^{N}_{i=1}\lambda^{T}\psi_{i}(x_{i})$, where $\lambda \in \Theta$ is the dual variable or Lagrange multiplier with respect to the coupled constraint \eqref{2}. Note from Assumption 2 that the strong duality condition can be satisfied. Then, based on the Karush-Kuhn-Tucker (KKT) conditions, we can obtain the following lemma.

\begin{Lemma}
Suppose that Assumptions 1-2 hold. $x^{*}=\text{col}(x^{*}_{i})^{N}_{i=1} \in \mathbb{R}^{n}$ is an optimal solution to problem \eqref{1} if and only if there exists $\lambda^{*}_{0} \in \mathbb{R}^{m}$ such that
\begin{subequations}\label{2a}
\begin{align}
0 \in \nabla f(x^{*})+\nabla \psi^{T}(x^{*})\lambda^{*}_{0}+\mathcal{N}_{\Omega}(x^{*}), \label{2a.1}\\
\lambda^{*}_{0} \in \Theta, \psi(x^{*}) \in \Upsilon, (\lambda^{*}_{0})^{T}\psi(x^{*})=0.\label{2a.2}
\end{align}
\end{subequations}
where $\nabla f(x^{*})$ is the gradient of $f(x)$ at $x^{*}$, and $\nabla \psi(x^{*})=[\nabla \psi_{1}(x^{*}_{1}), \ldots, \nabla \psi_{N}(x^{*}_{N})]\in \mathbb{R}^{m\times n}$ is the Jacobian matrix of vector function $\psi(x)$ at $x^{*}$.
\end{Lemma}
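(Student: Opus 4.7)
The plan is to recognize that Lemma 1 is the standard KKT characterization for a convex program with a closed convex set constraint, an affine coupled equality, and a convex coupled inequality, under the Slater condition provided by Assumption~2(iii). So the proof decomposes into the two standard directions, and the main work is just to rewrite the usual KKT conditions in the normal-cone form stated in \eqref{2a.1}.

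For the necessity direction (only if), I would first use Assumption~2 to observe that \eqref{1} is a convex program: the feasible set is the intersection of $\Omega$ with a convex sublevel set (from $g(x)\le 0$, convex in $x$) and an affine hyperplane (from $h(x)=0$). Slater's condition from Assumption~2(iii) then guarantees strong duality and the existence of a Lagrange multiplier $\lambda_0^*=\mathrm{col}(\mu^*,\nu^*)\in\Theta$ such that $(x^*,\lambda_0^*)$ is a saddle point of $L(x,\lambda)=f(x)+\lambda^T\psi(x)$ on $\Omega\times\Theta$. Saddle-point optimality in the $\lambda$-variable together with $\Theta=\mathbb{R}_{\ge0}^p\times\mathbb{R}^q$ and $\Upsilon=\mathbb{R}_{\le0}^p\times\{0_q\}$ immediately yields primal feasibility $\psi(x^*)\in\Upsilon$, dual feasibility $\lambda_0^*\in\Theta$, and the complementarity $(\lambda_0^*)^T\psi(x^*)=0$, which is the content of \eqref{2a.2}. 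Saddle-point optimality in the $x$-variable says that $x^*$ minimizes the convex, differentiable map $x\mapsto f(x)+(\lambda_0^*)^T\psi(x)$ over the closed convex set $\Omega$. The first-order condition for such a constrained minimum is precisely
\begin{align*}
-\bigl(\nabla f(x^*)+\nabla\psi^T(x^*)\lambda_0^*\bigr)\in\mathcal{N}_{\Omega}(x^*),
\end{align*}
which rearranges to \eqref{2a.1}. Here I would invoke the definition of the normal cone in \eqref{m1}, since it exactly captures the variational inequality that characterizes the minimizer of a convex differentiable function over a closed convex set.

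For the sufficiency direction (if), I would assume the existence of $\lambda_0^*$ satisfying \eqref{2a.1}--\eqref{2a.2} and show $x^*$ is optimal. The condition \eqref{2a.1} says that there exists $w^*\in\mathcal{N}_{\Omega}(x^*)$ with $\nabla f(x^*)+\nabla\psi^T(x^*)\lambda_0^*+w^*=0$, which, by the definition of the normal cone, is exactly the first-order sufficient condition for $x^*$ to minimize the convex function $L(\cdot,\lambda_0^*)$ over $\Omega$ (convexity of this map follows from convexity of $f$, convexity of each $g_i$ with $\mu^*\ge 0$, and affineness of each $h_i$). Hence for every feasible $x$ (in particular, $x\in\Omega$ with $\psi(x)\in\Upsilon$) we have $f(x)+(\lambda_0^*)^T\psi(x)\ge f(x^*)+(\lambda_0^*)^T\psi(x^*)=f(x^*)$, where the last equality uses the complementarity in \eqref{2a.2}. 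Since $\lambda_0^*\in\Theta$ and $\psi(x)\in\Upsilon$ force $(\lambda_0^*)^T\psi(x)\le 0$, we obtain $f(x)\ge f(x^*)$, proving optimality.

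The only subtle step is the translation between the normal-cone inclusion \eqref{2a.1} and the usual ``gradient of the Lagrangian vanishes on $\Omega$'' KKT statement; this is handled by the identity $\mathcal{P}_{\Omega}(x+w)=x$ for $w\in\mathcal{N}_{\Omega}(x)$ cited after \eqref{m1}, or equivalently by the variational inequality $(\nabla f(x^*)+\nabla\psi^T(x^*)\lambda_0^*)^T(y-x^*)\ge 0$ for all $y\in\Omega$. Everything else is an application of convex duality and the Slater condition, so I do not anticipate any real obstacle beyond citing the standard convex-analysis facts carefully.
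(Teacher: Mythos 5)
Your proof is correct and follows exactly the route the paper intends: the paper gives no explicit proof of Lemma 1, simply asserting that strong duality holds under Assumption 2 (Slater) and that the statement then follows from the KKT conditions, which is precisely the saddle-point/first-order-condition argument you spell out. Both directions of your argument are sound, and the translation between the normal-cone inclusion \eqref{2a.1} and the variational inequality characterizing a constrained minimizer is handled correctly.
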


\section{Main results}
In this section, we first develop a novel distributed event-triggered algorithm with constant step-sizes to solve the optimization problem \eqref{1}. Subsequently, the convergence analysis of the proposed algorithm is provided.

\subsection{Distributed algorithm design}
We develop the following event-triggered primal-dual distributed algorithm. Each agent $i\in \mathcal{V}$ owns three state variables $(x_{i,k}, \lambda_{i,k}, s_{i,k})\in \mathbb{R}^{n_{i}}\times \mathbb{R}^{m}\times \mathbb{R}^{m}$ at the iteration $k$, and their update rules are given as
\begin{subequations}\label{5}
\begin{align}
&x_{i,k+1}=\mathcal{P}_{\Omega_{i}} \Big (x_{i,k}-2\alpha \varpi_{i}(x_{i,k},\lambda_{i,k}) \notag \\
&\quad~~~~~+\alpha \varpi_{i}(x_{i,k-1}, \lambda_{i,k-1}) \Big), \label{5.1}\\
&\lambda_{i,k+1}=\mathcal{P}_{\Theta}\Big(\lambda_{i,k}+2\alpha \psi_{i}(x_{i,k})-\alpha \psi_{i}(x_{i,k-1}) \notag  \\
&\quad~~~~~-\alpha s_{i,k}-\alpha\beta \sum_{j\in \mathcal{N}_{i}} a_{ij}(\tilde{\lambda}_{i,k}-\tilde{\lambda}_{j,k})\Big), \label{5.2}\\
& s_{i,k+1}=s_{i,k}+\beta \sum_{j\in \mathcal{N}_{i}} a_{ij}(\tilde{\lambda}_{i,k+1}-\tilde{\lambda}_{j,k+1}), \label{5.3}
\end{align}
\end{subequations}
where $\varpi_{i}(x_{i,k},\lambda_{i,k})=\nabla f_{i}(x_{i,k})+\nabla \psi^{T}_{i}(x_{i,k})\lambda_{i,k}$, $\alpha$ and $\beta$ are both constant step-sizes that will be specified later. The update of \eqref{5} is developed by using the primal-dual method, in which $x_{i,k}$ is a primal variable, $\lambda_{i,k}$ is a dual variable and $s_{i,k}$ is an auxiliary variable. In addition, $\tilde{\lambda}_{i,k}$ denotes the information that agent $i$ broadcasts to its neighbors at the latest triggering time instant of the iteration $k$, which is defined as
\begin{align*}
\tilde{\lambda}_{i,k}=\begin{cases} \lambda_{i,k}, & k\in \mathcal{K}_{i} \\
\tilde{\lambda}_{i,k-1}, & \text{otherwise}, \end{cases}
\end{align*}
where $\mathcal{K}_{i}=\{k^{1}_{i}, k^{2}_{i}, k^{3}_{i}, \ldots \}, k^{j}_{i} \in \mathbb{N}$ is the set of the triggering times of agent $i$, and it is determined by the following triggering rule
\begin{align}\label{4}
& k^{l+1}_{i}=\min\{k\in \mathbb{N} | k>k^{l}_{i}, \Vert \lambda_{i,k}-\tilde{\lambda}_{i,k-1}\Vert>\varepsilon_{i,k}\},
\end{align}
where $\varepsilon_{i,k}\ge 0$ is the event-triggering threshold that will be given later. We set that the events of all the agents are automatically triggered at $k=0$, and then the set $\mathcal{K}_{i}$ can be well defined.

Some remarks of algorithm \eqref{5} are given as follows.
\begin{enumerate}
  \item [\textbf{i)}] The update of the primal variable $x_{i,k}$ is the ``gradient+momentum" type by using the optimistic gradient method in \cite{17a}, which includes the negative gradient term $-\varpi_{i}(x_{i,k},\lambda_{i,k})$ of the function $f_{i}(x_{i})+\lambda^{T}_{i}\psi_{i}(x_{i})$ and the negative momentum term $-(\varpi_{i}(x_{i,k},\lambda_{i,k})-\varpi_{i}(x_{i,k-1},\lambda_{i,k-1}))$.
  \item [\textbf{ii)}] The update of the dual variable $\lambda_{i,k}$ consists of the ``gradient+momentum" component $2\alpha \psi_{i}(x_{i,k})-\alpha \psi_{i}(x_{i,k-1})$ and event-triggered communication, which aims to guarantee the satisfaction of the coupled constraints \eqref{1.1}.
  \item [\textbf{iii)}] The update of the auxiliary $s_{i,k}$ is to achieve consensus of $\lambda_{i}$ for $\forall i\in \mathcal{V}$, which uses the latest event-triggered information $\sum_{j\in \mathcal{N}_{i}} a_{ij}(\tilde{\lambda}_{i,k+1}-\tilde{\lambda}_{j,k+1})$.
\end{enumerate}

From the triggering rule \eqref{4}, we know that if the derivation between the state $\lambda_{i,k}$ at the current iteration $k$ and the state $\tilde{\lambda}_{i,k-1}$ at the latest event-triggering time instant of the iteration time $k-1$ exceeds the threshold value $\varepsilon_{i,k}$, then a new event is triggered, i.e., $\tilde{\lambda}_{i,k}=\lambda_{i,k}$ and agent $i$ will broadcast $\tilde{\lambda}_{i,k}$ to its neighbors. Otherwise, $\tilde{\lambda}_{i,k}=\tilde{\lambda}_{i,k-1}$ and agent $i$ does not broadcast any information at the iteration $k$.

It follows from \eqref{5} that each agent $i\in \mathcal{V}$ only needs the state information $\tilde{\lambda}_{j,k}$ that is broadcasted from its neighbors $j\in \mathcal{N}_{i}$ at the nearest triggering instant of $k$. It is obvious that the proposed algorithm can be implemented in a distributed manner. The main steps of implementing algorithm \eqref{5} are shown in the following Algorithm 1.

\begin{algorithm}[htb]
	\caption{Event-triggered distributed optimization algorithm} \label{alg2}
\begin{algorithmic}[1]
\STATE \textbf{Initiation:} \\
   $x_{i,0} \in \mathbb{R}^{n_{i}}, \lambda_{i,0} \in \mathbb{R}^{m}, s_{i,0}=0$ for $\forall i\in \mathcal{V}$, \\
   Set $x_{i,-1}=x_{i,0}$ and $\lambda_{i,-1}=\lambda_{i,0}$.
   Broadcast $\lambda_{i,0}$ to its neighbors.\\
\STATE \textbf{for} each agent $i\in \mathcal{V}$ \textbf{do}\\
\STATE ~~Perform the updates \eqref{5.1} and \eqref{5.2};
\STATE ~~Test the event-triggered rule \eqref{4};
\STATE ~~\textbf{if} triggered  \textbf{then}
\STATE ~~~~Broadcast $\lambda_{i,k+1}$ to its neighbors;
\STATE ~~\textbf{end if}
\STATE ~~ Update the local update \eqref{5.3};
\STATE \textbf{end for}
\STATE Set $k=k+1$.
\end{algorithmic}
\end{algorithm}

Let $e_{i,k}=\tilde{\lambda}_{i,k}-\lambda_{i,k}$ be the error between the state $\lambda_{i,k}$ at the current iteration $k$ and the latest event-triggering time instant. For the case of $k \notin \mathcal{K}_{i}$, it follows that $\tilde{\lambda}_{i,k}=\tilde{\lambda}_{i,k-1}$. According to the event-triggering rule \eqref{4}, we obtain that if $\Vert e_{i,k}\Vert>\varepsilon_{i,k}$, it implies that $k\in \mathcal{K}_{i}$ and it occurs a contradiction. Then, we have that $\Vert e_{i,k}\Vert \le  \varepsilon_{i,k}$. On the other hand, if $k\in \mathcal{K}_{i}$, it implies that $e_{i,k}=0$. It then follows that $\Vert e_{i,k}\Vert \le \varepsilon_{i,k}$ holds for all $k\ge 0$. For the triggering threshold $\varepsilon_{i,k}$, we make the following assumption.
\begin{Assumption}
Denote $E_{k}=\max_{i\in \mathcal{V}}(\varepsilon_{i,k})$ for $\forall k \in \mathbb{N}$. $E_{k}$ is nonincreasing and summable, i.e., $\sum^{\infty}_{k=0}E_{k}<\infty$.
\end{Assumption}

Let $x_{k}=\text{col}(x_{i,k})^{N}_{i=1}, \lambda_{k}=\text{col}(\lambda_{i,k})^{N}_{i=1}, s_{k}=\text{col}(s_{i,k})^{N}_{i=1}, \nabla f(x_{k})=\text{col}(\nabla f_{i}(x_{i,k}))^{N}_{i=1}$, $\nabla \tilde{\psi}(x_{k})=\text{diag}($ $\nabla \psi_{i}(x_{i,k}))^{N}_{i=1}, \tilde{\psi}(x_{k})=\text{col}(\psi_{i}(x_{i,k}))^{N}_{i=1}, e_{k}=\text{col}(e_{i,k})^{N}_{i=1}$, $\mathcal{P}_{\Omega}(\cdot)=\text{col}(\mathcal{P}_{\Omega_{i}}(\cdot))^{N}_{i=1}$. According to the definition of errors $e_{i,k}$, one has that $\tilde{\lambda}_{k}=\text{col}(\tilde{\lambda}_{i,k})^{N}_{i=1}$ $=\lambda_{k}+e_{k}$. Then, algorithm \eqref{5} can be written in a compact form
\begin{align}\label{6}
\begin{cases}
x_{k+1}=\mathcal{P}_{\Omega}\big (x_{k}-2\alpha(\nabla f(x_{k})+\nabla \tilde{\psi}^{T}(x_{k})\lambda_{k})\\
~~~~~~~~~+\alpha(\nabla f(x_{k-1})+\nabla \tilde{\psi}^{T}(x_{k-1})\lambda_{k-1})\big )\\
\lambda_{k+1}=\mathcal{P}_{\bm \Theta}\big (\lambda_{k}+2\alpha \tilde{\psi}(x_{k})-\alpha \tilde{\psi}(x_{k-1})\\
~~~~~~~~~-\alpha s_{k}-\alpha\beta(L\otimes I_{m})(\lambda_{k}+e_{k})\big )\\
s_{k+1}=s_{k}+\beta (L\otimes I_{m})(\lambda_{k+1}+e_{k+1}),
\end{cases}
\end{align}
where $L\in \mathbb{R}^{N\times N}$ is the Laplacian matrix of graph $\mathcal{G}$, and $\mathcal{P}_{\bm \Theta}=\bm 1_{N}\otimes \mathcal{P}_{\Theta}$.

The following proposition reveals the relationship between the fixed point of algorithm \eqref{6} and the optimal solution of problem \eqref{1}.
\begin{Proposition}
Suppose that Assumptions 1-2 hold. $x^{*}$ is an optimal solution of problem \eqref{1} if and only if $(x^{*}, \lambda^{*}, s^{*})$ is a fixed point of \eqref{6}.
\end{Proposition}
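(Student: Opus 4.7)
The plan is to unpack the fixed-point equations of \eqref{6} and match them term by term against the KKT system \eqref{2a} of Lemma~1. The backbone will be the projection/normal-cone identity cited after \eqref{m1}: $\mathcal{P}_{C}(z+w)=z$ iff $w\in\mathcal{N}_{C}(z)$, which converts every projection equality in \eqref{6} into a normal-cone inclusion.

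First I would substitute the stationarity conditions $x_{k+1}=x_{k}=x_{k-1}=x^{*}$, $\lambda_{k+1}=\lambda_{k}=\lambda_{k-1}=\lambda^{*}$, $s_{k+1}=s_{k}=s^{*}$ into \eqref{6}. At equilibrium $\lambda$ has stopped changing, so the triggering rule \eqref{4} eventually yields $\tilde\lambda=\lambda$ and hence $e^{*}=0$. The $s$-update then collapses to $(L\otimes I_{m})\lambda^{*}=0$, which by Assumption~1 (connectivity of $\mathcal{G}$) forces the consensus $\lambda^{*}=\bm 1_{N}\otimes\lambda_{0}^{*}$ for some $\lambda_{0}^{*}\in\mathbb{R}^{m}$. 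Applying the normal-cone identity to the simplified $x$- and $\lambda$-updates yields $0\in\nabla f(x^{*})+\nabla\tilde\psi^{T}(x^{*})\lambda^{*}+\mathcal{N}_{\Omega}(x^{*})$ and $\tilde\psi(x^{*})-s^{*}\in\mathcal{N}_{\bm\Theta}(\lambda^{*})$. Since consensus implies $\nabla\tilde\psi^{T}(x^{*})\lambda^{*}=\nabla\psi^{T}(x^{*})\lambda_{0}^{*}$, the first inclusion is exactly \eqref{2a.1}; the second decouples per agent as $\psi_{i}(x_{i}^{*})-s_{i}^{*}\in\mathcal{N}_{\Theta}(\lambda_{0}^{*})$ for every $i\in\mathcal{V}$.

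Next I would assemble these per-agent inclusions into the global complementary-slackness statement \eqref{2a.2}. The set $\mathcal{N}_{\Theta}(\lambda_{0}^{*})$ is a convex cone and hence closed under finite sums, so summing over $i$ gives $\psi(x^{*})-\sum_{i}s_{i}^{*}\in\mathcal{N}_{\Theta}(\lambda_{0}^{*})$. To remove the auxiliary term I would left-multiply \eqref{5.3} by $\bm 1_{N}^{T}\otimes I_{m}$ and invoke $\bm 1_{N}^{T}L=0$, obtaining the invariance $\sum_{i}s_{i,k+1}=\sum_{i}s_{i,k}$; combined with the initialization $s_{i,0}=0$ in Algorithm~1 this yields $\sum_{i}s_{i}^{*}=0$, and thus $\psi(x^{*})\in\mathcal{N}_{\Theta}(\lambda_{0}^{*})$. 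A direct calculation using $\Theta=\mathbb{R}_{\ge 0}^{p}\times\mathbb{R}^{q}$ and $\Upsilon=\mathbb{R}_{\le 0}^{p}\times\{0_{q}\}$ then rewrites this single inclusion as the triple $\lambda_{0}^{*}\in\Theta$, $\psi(x^{*})\in\Upsilon$, $\lambda_{0}^{*T}\psi(x^{*})=0$, which is precisely \eqref{2a.2}.

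For the converse direction I would construct a fixed point from any KKT pair $(x^{*},\lambda_{0}^{*})$ by setting $\lambda^{*}=\bm 1_{N}\otimes\lambda_{0}^{*}$ and $s_{i}^{*}=\psi_{i}(x_{i}^{*})-\tfrac{1}{N}\psi(x^{*})$; then $\psi_{i}(x_{i}^{*})-s_{i}^{*}=\tfrac{1}{N}\psi(x^{*})\in\mathcal{N}_{\Theta}(\lambda_{0}^{*})$ by the cone property, $\sum_{i}s_{i}^{*}=0$, and running the preceding equivalences backward verifies every equation of \eqref{6}. The main obstacle I anticipate is reconciling agent-wise normal-cone inclusions with a single global KKT condition; this hinges on two structural facts worth flagging explicitly, namely the convex-cone property of $\mathcal{N}_{\Theta}(\lambda_{0}^{*})$ (so that local inclusions add) and the Laplacian-driven conservation law $\sum_{i}s_{i,k}\equiv 0$ stemming from the zero initialization.
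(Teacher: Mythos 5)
Your proof is correct and follows essentially the same route as the paper's: convert each projection fixed-point equation into a normal-cone inclusion, use connectivity to force $\lambda^{*}=\bm 1_{N}\otimes\lambda^{*}_{0}$, eliminate $s^{*}$ via the $\textbf{Im}(L)=\textbf{Ker}(\bm 1^{T}_{N})$ structure, and invoke Lemma 1; your summation of per-agent inclusions over the convex cone $\mathcal{N}_{\Theta}(\lambda^{*}_{0})$ is the same mechanism as the paper's left-multiplication of \eqref{8} by $\bm 1^{T}_{N}\otimes I_{m}$. The one place you improve on the paper is the converse direction, where your explicit choice $s^{*}_{i}=\psi_{i}(x^{*}_{i})-\tfrac{1}{N}\psi(x^{*})$ makes concrete the existence of $s^{*}$ that the paper only asserts, and yields $(\lambda^{*})^{T}(\tilde{\psi}(x^{*})-s^{*})=(\lambda^{*}_{0})^{T}\psi(x^{*})=0$ directly from complementary slackness rather than from the paper's imprecise claim that $\psi(x^{*})=0$.
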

\begin{proof}
\textbf{Sufficiency:} Let $(x^{*}, \lambda^{*}, s^{*})$ be a fixed point of algorithm \eqref{6}, and one has that
\begin{subequations}\label{7}
\begin{align}
&\mathcal{P}_{\Omega}(x^{*}-\alpha(\nabla f(x^{*})+\nabla \tilde{\psi}^{T}(x^{*})\lambda^{*}))=x^{*},\label{7.1}\\
&\mathcal{P}_{\bm \Theta}(\lambda^{*}+\alpha \tilde{\psi}(x^{*})-\alpha s^{*})=\lambda^{*}, \label{7.2}\\
&(L\otimes I_{m})\lambda^{*}=0.\label{7.3}
\end{align}
\end{subequations}
Under Assumption 1, we note from \eqref{7.3} that $\lambda^{*}=\bm 1_{N}\otimes \lambda^{*}_{0}$ for some vector $\lambda^{*}_{0}\in \mathbb{R}^{m}$. According to \eqref{7.1}, we have that $-\alpha(\nabla f(x^{*})+\nabla \tilde{\psi}^{T}(x^{*})\lambda^{*})\in \mathcal{N}_{\Omega}(x^{*})$. Based on $\lambda^{*}=\bm 1_{N}\otimes \lambda^{*}_{0}$, one can derive that $0 \in \nabla f(x^{*})+\nabla \psi^{T}(x^{*})\lambda^{*}_{0}+\mathcal{N}_{\Omega}(x^{*})$, which implies that \eqref{2a.1} holds. Under the initial value $s_{i,0}=0$, from the last equation of \eqref{6}, we have that $s_{t+1}=\beta L \sum^{t+1}_{k=1}\tilde{\lambda}_{k}$ for any $t\ge 0$. This implies that $s^{*} \in \textbf{Im}(L)$, and then there exists $y^{*} \in \mathbb{R}^{Nm}$ such that $s^{*}=(L\otimes I_{m})y^{*}$. In addition, from \eqref{7.2}, we obtain that
\begin{align}\label{8}
&\lambda^{*}\in \bm \Theta, (\lambda^{*})^{T}(\tilde{\psi}(x^{*})- s^{*})=0, \tilde{\psi}(x^{*})- s^{*} \in \bm \Upsilon,
\end{align}
where $\bm \Theta=\bm 1_{N}\otimes \Theta$ and $\bm \Upsilon= \bm 1_{N} \otimes \Upsilon$. Substituting $\lambda^{*}=\bm 1_{N}\otimes \lambda^{*}_{0}$ and $s^{*}=(L\otimes I_{m})y^{*}$ into \eqref{8}, we obtain that $\lambda^{*}_{0}\in \Theta$ and $(\lambda^{*}_{0})^{T}\psi(x^{*})=0$. Moreover, by left multiplying $\bm 1^{T}_{N}\otimes I_{m}$ with the last equation of \eqref{8}, one can derive that $(\bm 1^{T}_{N}\otimes I_{m})\tilde{\psi}(x^{*})=\sum^{N}_{i=1}\psi_{i}(x^{*}_{i})=\psi(x^{*})\in \Upsilon$. This implies that \eqref{2a.2} holds. According to Lemma 1, we have that $x^{*}$ is an optimal solution of problem \eqref{1}

\textbf{Necessity:} If $x^{*}$ is an optimal solution of problem \eqref{1}, from Lemma 1, one has that there exists $\lambda^{*}_{0}$ such that \eqref{2a} holds. By setting $\lambda^{*}=\bm 1_{N}\otimes \lambda^{*}_{0}$, we know that $(x^{*}, \lambda^{*})$ satisfies \eqref{7.1} and \eqref{7.3}. The rest is to prove the existence of $s^{*}$ satisfying \eqref{7.2}. According to \eqref{2a}, we have that $\lambda^{*} \in \bm \Theta$ and $\psi(x^{*})=(\bm 1^{T}_{N}\otimes I_{m})\tilde{\psi}(x^{*}) \in  \Upsilon$. From $\textbf{Ker}(\bm 1^{T}_{N})=\textbf{Im}(L)$, there exists a constant $s^{*}\in \textbf{Im}(L)$ such that $\tilde{\psi}(x^{*})-s^{*}\in \bm \Upsilon$. Moreover, we obtain that $(\lambda^{*})^{T}(\tilde{\psi}(x^{*})-s^{*})=0$ since $\psi(x^{*})=0$ and $\bm 1^{T}_{m}L=0$. Based on \eqref{8}, we know that \eqref{7.2} holds. As a result, we conclude that $(x^{*}, \lambda^{*}, s^{*})$ is a fixed point of \eqref{6}.
\end{proof}

%

\subsection{Convergence analysis}
We provide the convergence analysis of \eqref{6}. For convenience, we define the variables $\xi_{k}=[x^{T}_{k}, \lambda^{T}_{k}]^{T} \in \mathbb{R}^{r}, r=n+Nm$, $z_{k}=[0^{T}_{n},s^{T}_{k}]^{T}$, $\tilde{e}_{k}=[0^{T}_{n}, (e_{k})^{T}]^{T}$. Then, algorithm \eqref{6} can be written as
\begin{subequations}\label{9}
\begin{align}
\xi_{k+1}&=\mathcal{P}_{\Lambda}(\xi_{k}-2\alpha \Phi(\xi_{k})+\alpha \Phi(\xi_{k-1}) \label{9a}\\
&\quad-\alpha z_{i,k} -\alpha \beta L_{s}(\xi_{k}+\tilde{e}_{k})) \notag\\
z_{k+1}&=z_{k}+\beta L_{s}(\xi_{k+1}+\tilde{e}_{k+1}), \label{9b}
\end{align}
\end{subequations}
where $\Lambda=\Omega\times \bm \Theta$, $L_{s}=\text{diag}(0_{n},L\otimes I_{m})$ and $\Phi(\xi_{k})$ is
\begin{align}\label{10}
\Phi(\xi_{k})=\begin{bmatrix} \nabla f(x_{k})+\nabla \tilde{\psi}^{T}(x_{k})\lambda_{k} \\ -\tilde{\psi}(x_{k}) \end{bmatrix}.
\end{align}
Under Assumption 2.2, one has that $\Phi(\xi_{k})$ satisfies the Lipshitz continuity, i.e., $\Vert \Phi(\xi_{k+1})-\Phi(\xi_{k})\Vert\le \kappa_{c} \Vert \xi_{k+1}-\xi_{k}\Vert$, where $\kappa_{c}>0$ is a Lipschitz constant.

The convergence result of algorithm \eqref{5} is shown in the following theorem and its proof can be seen in Appendix A.
\begin{Theorem}
Suppose that Assumptions 1-3 hold and the constant step-sizes $\alpha, \beta$ satisfy that $\alpha <\frac{1}{3\kappa_{c}}$ and  $\beta\le \frac{1-3\alpha\kappa_{c}}{\alpha\lambda_{\max}(L)}$. The distributed algorithm \eqref{5} under the event-triggering update rule \eqref{4} guarantees that $x_{k}$ converges to an optimal solution of problem \eqref{1}, i.e., $\lim_{k\to \infty} x_{k}=x^{*}$, and the variables $(\lambda_{k}, s_{k})$ converge to a point $(\lambda^{*}, s^{*})$, i.e., $\lim_{k\to \infty} \lambda_{k}=\lambda^{*}$ and $\lim_{k\to \infty} s_{k}=s^{*}$.
\end{Theorem}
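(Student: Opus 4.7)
The plan is to build a quadratic Lyapunov function anchored at a fixed point $(x^{*},\lambda^{*},s^{*})$ of \eqref{6}, which exists by Proposition 1, and derive a quasi-Fej\'er descent inequality whose remainder is summable via Assumption 3. Write $\xi^{*}=((x^{*})^{T},(\lambda^{*})^{T})^{T}$ and $z^{*}=(0_{n}^{T},(s^{*})^{T})^{T}$. The first move is to expand $\|\xi_{k+1}-\xi^{*}\|^{2}$ using the projection characterization $(u-\mathcal{P}_{\Lambda}(u))^{T}(\xi^{*}-\mathcal{P}_{\Lambda}(u))\le 0$ applied to the argument of $\mathcal{P}_{\Lambda}$ in \eqref{9a}. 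This yields an inequality on $\|\xi_{k+1}-\xi^{*}\|^{2}$ in terms of $\|\xi_{k}-\xi^{*}\|^{2}$, $-\|\xi_{k+1}-\xi_{k}\|^{2}$, and inner products against $\Phi(\xi_{k})$, $\Phi(\xi_{k})-\Phi(\xi_{k-1})$, $z_{k}-z^{*}$, and $L_{s}(\xi_{k}+\tilde{e}_{k})$. In parallel, expanding $\tfrac{1}{\beta}\|z_{k+1}-z^{*}\|^{2}$ via \eqref{9b} produces telescoping with some of those inner products.

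The next step is to eliminate the raw $\Phi(\xi_{k})$, $z_{k}-z^{*}$, and $L_{s}\xi_{k}$ terms by subtracting the fixed-point identities in \eqref{7}. Since $f_{i}+\lambda^{T}\psi_{i}$ is convex in $x_{i}$ for $\lambda\in\bm\Theta$ and $\tilde{\psi}$ is affine in $\lambda$, the convex-concave structure of the Lagrangian at $(x^{*},\lambda^{*})$ makes the cross terms either non-negative or telescopable: specifically, the saddle-point relations give $\langle \Phi(\xi_{k})-\Phi(\xi^{*}),\xi_{k+1}-\xi^{*}\rangle$-type contributions that combine constructively, while the $L_{s}$ pieces merge with $z_{k+1}-z_{k}=\beta L_{s}(\xi_{k+1}+\tilde{e}_{k+1})$. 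The optimistic momentum piece $\alpha\langle\Phi(\xi_{k})-\Phi(\xi_{k-1}),\xi_{k+1}-\xi^{*}\rangle$ is bounded using the Lipschitz constant $\kappa_{c}$ and Young's inequality, producing quadratic terms in $\|\xi_{k}-\xi_{k-1}\|^{2}$ and $\|\xi_{k+1}-\xi_{k}\|^{2}$; I would pre-absorb these by including a momentum reserve $\alpha\kappa_{c}\|\xi_{k}-\xi_{k-1}\|^{2}$ in the Lyapunov function.

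Collecting the pieces, the candidate Lyapunov function is $V_{k}=\|\xi_{k}-\xi^{*}\|^{2}+\tfrac{1}{\beta}\|z_{k}-z^{*}\|^{2}+\alpha\kappa_{c}\|\xi_{k}-\xi_{k-1}\|^{2}$, and I would establish a bound of the form $V_{k+1}\le V_{k}-(1-3\alpha\kappa_{c}-\alpha\beta\lambda_{\max}(L))\|\xi_{k+1}-\xi_{k}\|^{2}+R_{k}$. The stated conditions $\alpha<1/(3\kappa_{c})$ and $\beta\le(1-3\alpha\kappa_{c})/(\alpha\lambda_{\max}(L))$ are exactly what make the quadratic coefficient non-negative. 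The remainder $R_{k}$ collects all terms containing $\tilde{e}_{k}$ and $\tilde{e}_{k+1}$; from $\|\tilde{e}_{k}\|\le\sqrt{N}\,E_{k-1}$ and Cauchy-Schwarz it satisfies $R_{k}\le C_{1}E_{k-1}\sqrt{V_{k}}+C_{2}E_{k-1}^{2}$, and Assumption 3 then makes $\sum_{k}R_{k}<\infty$ once $\{V_{k}\}$ is shown bounded, which is a short induction. A Robbins--Siegmund / quasi-Fej\'er argument then gives convergence of $V_{k}$, boundedness of $(\xi_{k},z_{k})$, and $\|\xi_{k+1}-\xi_{k}\|\to 0$. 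Passing to any cluster point and using continuity of $\mathcal{P}_{\Lambda}$, $\Phi$, together with $\tilde{e}_{k}\to 0$, shows the limit satisfies \eqref{7}, and hence $x_{k}\to x^{*}$ by Proposition 1; uniqueness of the limit follows because the Lyapunov distance is measured to an arbitrary fixed point.

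The main difficulty is the Lyapunov design: the momentum reserve must exactly absorb one future step of the optimistic cross term, the consensus penalty $\tfrac{1}{\beta}\|z_{k}-z^{*}\|^{2}$ must telescope against the $\alpha\beta L_{s}\xi_{k}$ contribution in the $\lambda$-update, and the triggering remainder $R_{k}$ must stay linear—rather than quadratic—in $E_{k}$ so that summability of the thresholds suffices. Tuning the Young-inequality weights so that all three constraints are simultaneously compatible is what produces the tight coupling between $\alpha$, $\beta$, $\kappa_{c}$, and $\lambda_{\max}(L)$ stated in the theorem; I would calibrate these constants backward from the stated bounds rather than forward.
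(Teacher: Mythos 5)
Your overall architecture coincides with the paper's: a quasi-Fej\'er descent inequality for a Lyapunov function anchored at a fixed point supplied by Proposition 1, a momentum reserve absorbing the optimistic cross term, a summable event-triggering remainder, a cluster-point argument, and uniqueness of the limit via the Fej\'er property. However, there is one genuine gap in your Lyapunov design: the weight you place on the tracking block. You propose $\tfrac{1}{\beta}\Vert z_{k}-z^{*}\Vert^{2}$ in the Euclidean norm, but this does not telescope against the $-\alpha s_{k}$ contribution in the dual update. Concretely, the $\lambda$-update expansion produces a cross term of the form $-\alpha\,\lambda_{k+1}^{T}(s_{k}-s^{*})$ (after using $(\lambda^{*})^{T}(s_{k}-s^{*})=0$), while expanding $\Vert s_{k+1}-s^{*}\Vert^{2}_{M}$ along $s_{k+1}=s_{k}+\beta (L\otimes I_{m})\tilde{\lambda}_{k+1}$ produces $2\beta (s_{k}-s^{*})^{T}M(L\otimes I_{m})\tilde{\lambda}_{k+1}$. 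These cancel only if $M(L\otimes I_{m})$ acts as a multiple of the identity on $\textbf{Im}(L\otimes I_{m})$, i.e., only if $M$ is the pseudo-inverse weight. This is precisely why the paper invokes Lemma 3, introduces the auxiliary variables $s'_{k}$ with $s_{k}-s^{*}=(L\otimes I_{m})(s'_{k}-s')$, and measures the $s$-block in the metric $W=\frac{1}{\beta}(L+\bm 1_{N}\bm 1_{N}^{T})^{-1}\otimes I_{m}$ in Proposition 2. With $M=I$ the cross term $(s_{k}-s^{*})^{T}(L\otimes I_{m})\tilde{\lambda}_{k+1}$ is not proportional to $(s_{k}-s^{*})^{T}\lambda_{k+1}$, and the descent inequality you write down cannot be established; the fix is structural, not a matter of recalibrating Young-inequality constants backward from the stated step-size bounds.

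Two smaller cautions. First, your claim that boundedness of $\{V_{k}\}$ is ``a short induction'' hides a discrete Gronwall step: since $R_{k}\le C_{1}E_{k-1}\sqrt{V_{k}}+C_{2}E_{k-1}^{2}$ depends on $\sqrt{V_{k}}$ itself, you need the recursive-inequality lemma the paper uses in Proposition 3 (Lemma 1 of \cite{17}) to obtain the a priori bound $\sqrt{V_{t}}\le \sqrt{V_{0}}+C\sum_{k}E_{k}$ before summability of $R_{k}$ follows; only then does the Robbins--Siegmund argument close. Second, to apply Proposition 1 at a cluster point you must verify $(L\otimes I_{m})\lambda^{0}=0$. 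In the paper this follows from $\sum_{k}\Vert s_{k+1}-s_{k}\Vert^{2}_{W}<\infty$, hence $s_{k+1}-s_{k}=\beta(L\otimes I_{m})\tilde{\lambda}_{k+1}\to 0$. Your descent inequality retains only a $-c\Vert\xi_{k+1}-\xi_{k}\Vert^{2}$ term, so make sure the quadratic term generated by the $W$-weighted expansion of the $s$-update is kept with a negative sign rather than discarded; otherwise consensus of the multipliers at the limit is not justified.
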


%

\begin{Remark}
In contrast to the distributed optimization algorithm given in \cite{9} that includes two gradient pairs and requires twice communications in each iteration, the proposed algorithm \eqref{5} only involves one gradient pair and only need once communication in each iteration. In addition, we also introduce the event-triggered mechanism and therefore our algorithm can significantly reduce communication overhead than that of \cite{9}.
\end{Remark}

\begin{Remark}
The authors of \cite{15a} recently developed a distributed algorithm to handle coupled linear equality and nonlinear inequality constraints like \eqref{1.1}, which also achieves an $O(1/k)$ convergence rate. In contrast to the results of \cite{15a}, the advantages and differences of the proposed algorithm \eqref{5} are two-fold. (i) The proposed algorithm \eqref{5} is based on the event-triggered mechanism, which reduces the communication cost and does not sacrifice the convergence rate, while the algorithm of \cite{15a} is based on periodic communication and may lead to high communication cost. (ii) A minimization optimization problem must be solved when implementing the algorithm of \cite{15a}. Conversely, our algorithm \eqref{5} is easer to implement without solving the sub-optimization problem.
\end{Remark}

Define the following function
\begin{align}\label{t1}
F(\xi)=F(x,\lambda)=\sum^{N}_{i=1}f_{i}(x_{i})+\sum^{N}_{i=1}\lambda^{T}_{i} \psi_{i}(x_{i}),
\end{align}
where $\xi=[x^{T}, \lambda^{T}]^{T}$, $x=\text{col}(x_{i})^{N}_{i=1}$ and $\lambda=\text{col}(\lambda_{i})^{N}_{i=1}$.

We give the convergence rate of algorithm \eqref{5} in the following theorem and its proof can be seen in Appendix B.

\begin{Theorem}
Under the conditions of Theorem 1, the sequence $\{\xi_{k}\}$ generated by distributed algorithm \eqref{9} satisfies that
\begin{align}\label{th1}
&\vert F(\hat{\xi}_{t})-F(\xi^{*})) \vert \le \frac{1}{2t}(\Vert \xi_{0}-\xi^{*}\Vert^{2}_{P}+\rho^{2}+2\Sigma_{t}),
\end{align}
where $\hat{\xi}_{t}=\frac{1}{t}\sum^{t-1}_{k=0}\xi_{k+1}$, $\xi^{*}=[(x^{*})^{T}, (\lambda^{*})^{T}]^{T}$, $\rho>\Vert s^{*}\Vert$, $\Sigma_{t}=(\frac{4\gamma_{2}\sqrt{N}}{\gamma_{1}}\sum^{t-1}_{k=0}E_{k}+\sqrt{\frac{2}{\gamma_{1}}}(\Vert \xi_{0}-\xi^{*}\Vert_{P}+\Vert s_{0}-s^{*}\Vert_{W})+\rho_{s}) \sum^{t-1}_{k=0}\gamma_{2} \sqrt{N}E_{k}$, $\gamma_{1}=\min(2\kappa_{c}, \lambda_{\min}(W))$, $\gamma_{2}=\max(2\beta\lambda_{\max}(L), 1)$ and $\rho_{s}=\text{sup}_{\Vert s\Vert \le \rho} \Vert s^{*}-s\Vert$.
\end{Theorem}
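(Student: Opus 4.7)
The plan is to establish an $O(1/t)$ bound on the saddle gap of the convex--concave Lagrangian $F(x,\lambda)$ defined in \eqref{t1}, and then translate it into a two-sided bound on $F(\hat\xi_t)-F(\xi^*)$ via a perturbation-of-the-dual-variable argument that produces the explicit $\rho^2/t$ term. The convex--concave structure of $F$ (convex in $x$ on $\Omega$, linear hence concave in $\lambda$ on $\bm\Theta$) together with Jensen's inequality on the averaged iterate $\hat\xi_t$ will be the workhorse.

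I would first unfold the two projections in \eqref{9a}--\eqref{9b} using the standard property $(u-\mathcal{P}_K(u))^\top(v-\mathcal{P}_K(u))\le 0$ with an arbitrary reference point $(\xi,s)\in\Lambda\times\mathbb{R}^{Nm}$. After rearranging via the identity $2\langle a-b,a-c\rangle=\|a-b\|^2+\|a-c\|^2-\|b-c\|^2$ and using convexity of the primal part of $F$ together with linearity in $\lambda$, I expect a per-iteration inequality of the form
\begin{align*}
2\alpha[F(\xi_{k+1})-F(\xi)] &\le \|\xi_k-\xi\|_P^2-\|\xi_{k+1}-\xi\|_P^2+\|s_k-s\|_W^2-\|s_{k+1}-s\|_W^2\\
&\quad+ R_k(\tilde e_k)-Q_k,
\end{align*}
where $R_k(\tilde e_k)$ is affine in the triggering error $\tilde e_k$ and $Q_k\ge 0$ is a nonnegative residual (built out of the optimistic cross-terms $\|\Phi(\xi_{k+1})-\Phi(\xi_k)\|$ and the Laplacian consensus terms), which is ultimately discarded. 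Exactly here the step-size conditions $\alpha<1/(3\kappa_c)$ and $\beta\le(1-3\alpha\kappa_c)/(\alpha\lambda_{\max}(L))$ from Theorem~1 are invoked, through the Lipschitz constant $\kappa_c$ of $\Phi$, to guarantee $Q_k\ge 0$ and to fix the implicit metric $P$.

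Telescoping the above from $k=0$ to $t-1$ and dividing by $2\alpha t$ gives, by Jensen's inequality applied to $F(\cdot,\lambda)$ and $-F(x,\cdot)$,
\begin{align*}
F(\hat\xi_t,\lambda)-F(x,\hat\lambda_t)\;\le\;\frac{1}{2\alpha t}\Bigl(\|\xi_0-\xi\|_P^2+\|s_0-s\|_W^2\Bigr)+\frac{1}{2\alpha t}\sum_{k=0}^{t-1}R_k(\tilde e_k),
\end{align*}
for every admissible $(\xi,s)=(x,\lambda,s)$. The $\rho^2$ term on the right-hand side of \eqref{th1} is then produced by choosing $x=x^*$ and letting $\lambda$ range over a ball of radius $\rho$ around $\lambda^*$ (and a compatible $s$ with $\|s^*-s\|\le\rho_s$); complementary slackness $(\lambda_0^*)^\top\psi(x^*)=0$ from Lemma~1 converts the remaining gap into $\pm\bigl(F(\hat\xi_t)-F(\xi^*)\bigr)$, and maximizing over this ball both absorbs the unspecified sign and contributes the $\rho^2$ and $\rho_s$ quantities that appear in $\Sigma_t$.

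The remaining task is to control $\sum_{k=0}^{t-1}R_k(\tilde e_k)$ by the event-triggered budget. Since $\tilde e_k=[0^\top,e_k^\top]^\top$ with $\|e_{i,k}\|\le\varepsilon_{i,k}\le E_k$, a Cauchy--Schwarz step bounds $R_k$ by $\gamma_2\sqrt{N}E_k\cdot\bigl(\|s_k-s\|_W+\|\xi_k-\xi\|_P\bigr)$ (after identifying $\gamma_2=\max(2\beta\lambda_{\max}(L),1)$ and $\gamma_1=\min(2\kappa_c,\lambda_{\min}(W))$). The uniform bound on $\|\xi_k-\xi^*\|_P+\|s_k-s^*\|_W$ established inside the proof of Theorem~1 (a Lyapunov-type monotonicity up to an $\sum_{j<k}E_j$ perturbation) then gives the closed-form expression for $\Sigma_t$. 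The main obstacle will be precisely this coupled bookkeeping in Step~1: keeping the optimistic momentum terms $\alpha\Phi(\xi_{k-1})$, the Laplacian consensus error (which itself is perturbed by $\tilde e_k$), and the accumulated event-triggered noise mutually consistent inside a single quadratic potential, so that the weights appearing on the right of the per-iteration inequality coincide with the norms $P,W$ implicitly selected by the step-size conditions.
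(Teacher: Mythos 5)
Your overall route is the same as the paper's: a per-iteration primal--dual inequality obtained from the projection characterization of \eqref{9} (this is exactly Proposition~2, where the step-size conditions are used to make the quadratic residual nonnegative via the positive semi-definiteness of $P-3\kappa_{c}I$), telescoping plus the convex--concave/Jensen step (Lemma~5) to bound the saddle gap at the averaged iterate, a sup-over-a-ball perturbation to generate the $\rho^{2}$ term, and the uniform iterate bound of Proposition~3 to turn the accumulated triggering errors into $\Sigma_{t}$. One point in your sketch would not survive contact with the stated constants: the $\rho$-ball must be taken in the auxiliary variable $s$, not in $\lambda$. In the paper's argument $\xi$ is pinned to $\xi^{*}$ throughout (this is needed both for the monotonicity step and so that the telescoped quadratic is $\Vert \xi_{0}-\xi^{*}\Vert^{2}_{P}$), and only the free parameter $s\in\textbf{Im}(L\otimes I_{m})$ in the per-iteration inequality \eqref{t2} is allowed to range over $\{\Vert s\Vert\le\rho\}$ with $\rho>\Vert s^{*}\Vert$; since $(\xi_{k+1}-\xi^{*})^{T}z=(\lambda_{k+1}-\lambda^{*})^{T}s$, the supremum produces the nonnegative term $\rho\Vert\lambda_{k+1}-\lambda^{*}\Vert$ and the constants $\rho^{2}$ and $\rho_{s}=\sup_{\Vert s\Vert\le\rho}\Vert s^{*}-s\Vert$ exactly as they appear in \eqref{th1}. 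Perturbing $\lambda$ around $\lambda^{*}$ instead changes the reference point in $\Vert\xi_{0}-\xi\Vert^{2}_{P}$, breaks the step that requires $-(\Phi(\xi^{*})+z^{*})\in\mathcal{N}_{\Lambda}(\xi^{*})$, and leaves an uncontrolled $(\lambda_{k+1}-\lambda)^{T}s^{*}$ term, so it would yield a bound with different constants than the one claimed. A second, purely bookkeeping mismatch: your telescoped bound carries a prefactor $\frac{1}{2\alpha t}$, whereas in the paper the factor $1/\alpha$ is already absorbed into $P=\frac{1}{\alpha}I-\beta L_{s}$ and the final bound is $\frac{1}{2t}(\cdot)$; make sure your metric and your prefactor do not double-count $\alpha$.
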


\begin{Remark}
Based on Lemma 1 and Proposition 1, we obtain that $F(\xi^{*})=F(x^{*}, \lambda^{*})=f(x^{*})$ since $\sum^{N}_{i=1} (\lambda^{*}_{i})^{T}$ $\psi(x^{*}_{i})=0$. Under Assumption 3 that $\sum^{\infty}_{k=0}E_{k}<\infty$, one has that $\Sigma_{t}$ is bounded for any $t\ge 0$. It then follows from \eqref{th1} that $F(\hat{\xi}_{t})$ converges to the optimal value $f(\xi^{*})$ with an $O(1/t)$ convergence rate, where $t$ is the iteration number. mechanism.
\end{Remark}

\begin{Remark}
The advantages of the proposed distributed even-triggered algorithm \eqref{5} are two-fold. (i) The proposed algorithm \eqref{5} use the constant step-sizes, and its convergence rate outperforms than those algorithms in \cite{7,8,8a,9,10} that also considered the coupled nonlinear constraints. (ii) The communication cost can be effectively reduced by using event-triggered communication mechanism.
\end{Remark}


\section{Numerical simulation}
In this section, we provide a numerical example to demonstrate the effectiveness of the distributed event-triggered algorithm \eqref{5}. Similar to the example in \cite{9}, we consider a network of $N=10$ agents for solving the optimization problem \eqref{1}, in which local objective function $f_{i}(x_{i})$ and constraint functions $g_{i}(x_{i}),h_{i}(x_{i})$ are chosen as
\begin{align*}
&f_{i}(x_{i})=a_{i}x^{2}_{i}+b_{i}x_{i}+c_{i}\text{log}(1+e^{d_{i}x_{i}}),\\
&g_{i}(x_{i})=\pi_{i}x^{2}_{i}+\varsigma_{i},\\
&h_{i}(x_{i})=\gamma_{i}x_{i}+\delta_{i},
\end{align*}
and local set constraint $\Omega_{i}=[-1,1]$ is selected for any $i\in \mathcal{V}$. The data in the functions $f_{i}, g_{i}, h_{i}$ are randomly generated from $a_{i}\in [0,2], b_{i}\in [-5,5],  c_{i}\in [0,2], d_{i}\in [0,1], \pi_{i}\in [0,2], \varsigma_{i}\in [-2,0], \gamma_{i}\in [-1,1]$ and $\delta_{i}\in [-2,2]$. Consider a ring graph for describing the network topology of ten agents. Obviously, Assumption 1 can be satisfied.

\begin{figure}[!ht]
\centering
\includegraphics[width=0.5\textwidth, clip=true]{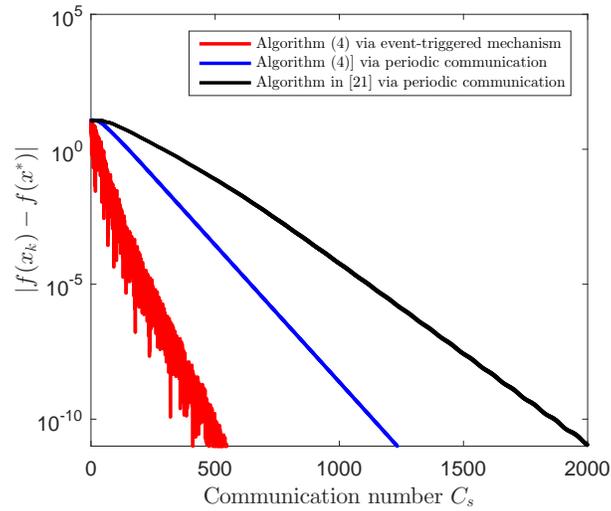}
\caption{$\vert f(x_{k})-f(x^{*})\vert$ versus the communication number $C_{s}$}
\end{figure}

We select the constant step-sizes of \eqref{5} as $\alpha=0.15$ and $\beta=1.2$. The event-triggering threshold for each agent is chosen as $\varepsilon_{i,k}=\frac{10}{e^{0.01(k+1)}}, i\in \mathcal{I}_{4}$, which satisfies the summable property in Assumption 3. The initial values of all the state variables are set as zero. Fig. 1 shows the convergence results of the objective error $\vert f(x_{k})-f(x^{*})\vert$ with respect to the communication number $C_{s}$ under algorithm \eqref{5} via event-triggered communication and periodic communication, and algorithm of \cite{9} via periodic communication. Note that $C_{s}=\frac{1}{10}\sum^{10}_{i=1}C_{i}$ denotes the average communication transmission number of ten agents and $C_{i}$ denotes agent $i$'s the communication number. From Fig. 1, we observe that the proposed algorithm \eqref{5} guarantees the objective error $\vert f(x_{k})-f(x^{*}) \vert$ converges to zero with a fast convergence speed, and the communication numbers can be significantly reduced by using the event-triggered mechanism. In addition, the specific average communication numbers $C_{s}$ at the accuracy of $10^{-10}$ for three algorithms are calculated as $n_{1}=440, n_{2}=1120$ and $n_{3}=1830$, respectively. It is shown that the proposed event-triggered algorithm \eqref{5} reduces about $75.96\%$ communication numbers than algorithm in \cite{9} at the convergence accuracy of $10^{-10}$.
%
%


\section{Conclusion}
This paper has developed a distributed event-triggered algorithm with the constant step-sizes for solving convex optimization problems with local set constraints, coupled constraints including affine equality constraint and nonlinear inequality constraint. We have shown that the proposed algorithm not only achieves an exact convergence to an optimal solution with an $O(1/k)$ convergence rate, but also can effectively reduce the communication cost. Future extensions include considering the optimization problems with coupled constraints under the general unbalanced directed graph.



%

\appendix
\renewcommand{\thesection}{Appendix A}

Before presenting the proofs of Theorems 1-2, we provide some useful Lemmas and propositions.

\begin{Lemma}[\cite{16a}]
Let $u_{k}\ge 0$ and $u_{k+1}\le u_{k}+\nu_{k}$, where $\nu_{k}\ge 0$ and $\sum^{\infty}_{k=0} \nu_{k}<\infty$. Then, $u_{k}$ is convergent as $k\to \infty$.
\end{Lemma}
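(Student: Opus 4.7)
The plan is to reduce this to a monotone convergence argument by absorbing the cumulative perturbation into an auxiliary sequence. Define the partial sums $S_{k}=\sum_{j=0}^{k-1}\nu_{j}$ (with $S_{0}=0$); by hypothesis $S_{k}$ is nondecreasing and bounded above by $S_{\infty}:=\sum_{j=0}^{\infty}\nu_{j}<\infty$, hence $S_{k}\to S_{\infty}$.

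Next I would introduce $w_{k}:=u_{k}-S_{k}$ and show it is a nonincreasing real sequence that is bounded below. For monotonicity, a direct computation gives
\begin{equation*}
w_{k+1}-w_{k}=(u_{k+1}-u_{k})-(S_{k+1}-S_{k})\le \nu_{k}-\nu_{k}=0,
\end{equation*}
using the recursive bound $u_{k+1}\le u_{k}+\nu_{k}$. For the lower bound, $w_{k}=u_{k}-S_{k}\ge -S_{k}\ge -S_{\infty}$ because $u_{k}\ge 0$ and $S_{k}\le S_{\infty}$. By the monotone convergence theorem for real sequences, $w_{k}$ converges to some limit $w_{\infty}\in\mathbb{R}$.

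Finally I would combine the two convergent pieces: $u_{k}=w_{k}+S_{k}\to w_{\infty}+S_{\infty}$ as $k\to\infty$, which establishes the claim. There is no significant obstacle here; the only point that requires any care is checking that the auxiliary sequence $w_{k}$ is bounded below, which uses nonnegativity of $u_{k}$ in an essential way (without $u_{k}\ge 0$ the sequence could drift to $-\infty$). Summability of $\nu_{k}$ ensures $S_{k}$ itself converges, so both components of the decomposition $u_{k}=w_{k}+S_{k}$ have limits, and the sum of their limits is the limit of $u_{k}$.
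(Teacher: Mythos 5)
Your proof is correct: the decomposition $u_{k}=w_{k}+S_{k}$ with $w_{k}=u_{k}-S_{k}$ nonincreasing and bounded below by $-S_{\infty}$ is the standard argument for this quasi-Fej\'er-type convergence result. The paper itself gives no proof (it cites the lemma from a reference), so there is nothing to compare against; your argument is complete and each step checks out.
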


\begin{Lemma}[\cite{16}]
Under Assumption 1, for any given vector $v\in \textbf{Im}(L)$, there exists a unique vector $v' \in \textbf{Im}(L)$ such that $v=Lv'$.
\end{Lemma}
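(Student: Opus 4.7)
The statement is a standard linear-algebra fact: under Assumption~1 the Laplacian $L$ is symmetric positive semidefinite, so I would prove Lemma~3 by exhibiting $L$ as a bijection from $\textbf{Im}(L)$ to itself via the orthogonal decomposition of $\mathbb{R}^N$.

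First I would record the structural facts about $L$ that Assumption~1 provides. Because the graph is undirected, $L=L^{T}$, and because it is connected, $\textbf{Ker}(L)=\text{span}(\bm 1_{N})$. Symmetry then gives the orthogonal decomposition
\begin{align*}
\mathbb{R}^{N}=\textbf{Ker}(L)\oplus \textbf{Im}(L),
\end{align*}
with $\textbf{Im}(L)=\textbf{Ker}(L)^{\perp}=\{y\in\mathbb{R}^{N}\mid \bm 1_{N}^{T}y=0\}$. In particular, $\textbf{Ker}(L)\cap \textbf{Im}(L)=\{0\}$.

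For existence, given $v\in\textbf{Im}(L)$, by the definition of the image there is some $w\in\mathbb{R}^{N}$ with $v=Lw$. Write $w=w_{0}+w_{1}$ with $w_{0}\in\textbf{Ker}(L)$ and $w_{1}\in\textbf{Im}(L)$, using the decomposition above. Then $Lw_{0}=0$, so $v=Lw=Lw_{1}$, and setting $v'=w_{1}\in\textbf{Im}(L)$ gives the desired preimage. For uniqueness, suppose $v'_{1},v'_{2}\in\textbf{Im}(L)$ both satisfy $Lv'_{1}=Lv'_{2}=v$. Then $v'_{1}-v'_{2}\in\textbf{Ker}(L)$, but also $v'_{1}-v'_{2}\in\textbf{Im}(L)$ because $\textbf{Im}(L)$ is a subspace. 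The intersection of these subspaces is $\{0\}$, hence $v'_{1}=v'_{2}$.

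There is no real obstacle here; the entire argument reduces to the symmetry of $L$ (which is where Assumption~1 enters, via the graph being undirected). If a more constructive description were wanted, I could alternatively define $v'$ via the Moore--Penrose pseudoinverse $v'=L^{\dagger}v$, since $L^{\dagger}$ restricted to $\textbf{Im}(L)$ is exactly the inverse of $L|_{\textbf{Im}(L)}$; but the orthogonal-decomposition proof above is the cleanest and does not require introducing new notation.
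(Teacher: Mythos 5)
Your proof is correct: symmetry of $L$ gives the orthogonal decomposition $\mathbb{R}^{N}=\textbf{Ker}(L)\oplus\textbf{Im}(L)$, existence follows by projecting any preimage onto $\textbf{Im}(L)$, and uniqueness follows from $\textbf{Ker}(L)\cap\textbf{Im}(L)=\{0\}$. The paper itself offers no proof of this lemma (it is imported by citation), and your argument is the standard one that establishes it, so there is nothing to compare beyond noting that connectedness is only needed to identify $\textbf{Ker}(L)$ as $\text{span}(\bm 1_{N})$ while symmetry alone already yields the bijectivity of $L$ on its image.
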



\begin{Lemma}[\cite{17a}]
Suppose that $f_{i}(x_{i})$ and $\psi_{i}(x_{i})$ both are convex. It follows that (i) $F(\xi)$ defined in \eqref{t1} is a convex-concave function. (ii) $\Phi(\xi)$ shown in \eqref{10} is a monotone operator, i.e., for any $\xi_{1}, \xi_{2} \in \mathbb{R}^{r}$,
\begin{align*}
  (\Phi(\xi_{2})-\Phi(\xi_{1}))^{T}(\xi_{2}-\xi_{1})\ge 0.
\end{align*}
\end{Lemma}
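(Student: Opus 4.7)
The plan is to establish (i) by directly invoking the convexity hypotheses on each component of $F$, and then to derive (ii) from (i) via the standard four-point saddle-operator argument for convex-concave functions.

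For (i), I would split $F(x,\lambda)=\sum_i f_i(x_i)+\sum_i\lambda_i^T\psi_i(x_i)$ and treat its two arguments separately. Holding $x$ fixed, $F$ is affine in $\lambda$ because the $f_i$ terms become constants and the remaining sum is linear in each $\lambda_i$; affine maps are both convex and concave, so concavity in $\lambda$ follows immediately. Holding $\lambda$ fixed, the first sum is convex in $x$ by Assumption~2(i); writing the second sum componentwise as $\sum_{i,j}\lambda_{i,j}\psi_{i,j}(x_i)$, each summand is convex in $x_i$ provided the multiplier of the genuinely convex $g$-block is nonnegative (multipliers of the affine $h$-block may be arbitrary). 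A sum of convex functions is convex, which gives the claim.

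For (ii), I would apply four first-order inequalities at the ``corners'' $(x_1,\lambda_1),(x_1,\lambda_2),(x_2,\lambda_1),(x_2,\lambda_2)$: two from convexity in $x$ (at $\lambda_1$ and at $\lambda_2$) and two from concavity in $\lambda$ (at $x_1$ and at $x_2$). Summing these four inequalities causes the values $F(x_2,\lambda_2)+F(x_1,\lambda_1)$ and $F(x_1,\lambda_2)+F(x_2,\lambda_1)$ to cancel on both sides, leaving
\begin{equation*}
(\nabla_x F(\xi_2)-\nabla_x F(\xi_1))^T(x_2-x_1)-(\nabla_\lambda F(\xi_2)-\nabla_\lambda F(\xi_1))^T(\lambda_2-\lambda_1)\ge 0,
\end{equation*}
which is exactly $(\Phi(\xi_2)-\Phi(\xi_1))^T(\xi_2-\xi_1)\ge 0$ once $\nabla_x F=\nabla f(x)+\nabla\tilde\psi^T(x)\lambda$ and $\nabla_\lambda F=\tilde\psi(x)$ are substituted. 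The main obstacle I foresee is the bookkeeping in (i) regarding which dual components must be sign-constrained: the nonlinear $g$-block requires nonnegative multipliers, so the monotonicity claim in (ii) should be read on the effective domain $\mathbb{R}^n\times\bm\Theta$, which is precisely where the iterates of algorithm \eqref{6} live owing to the projection $\mathcal{P}_{\bm\Theta}$ appearing in \eqref{5.2}.
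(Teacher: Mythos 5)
The paper does not actually prove this lemma; it is imported wholesale from \cite{17a} with no argument given. Your proof is therefore necessarily a "different route" in the trivial sense that the paper has none, but it is the standard and correct one: affineness in $\lambda$ gives concavity for free, convexity in $x$ follows from summing convex terms, and the four-point telescoping of the first-order convexity/concavity inequalities at the corners $(x_1,\lambda_1),(x_1,\lambda_2),(x_2,\lambda_1),(x_2,\lambda_2)$ yields exactly $(\Phi(\xi_2)-\Phi(\xi_1))^T(\xi_2-\xi_1)\ge 0$ after substituting $\nabla_x F$ and $-\nabla_\lambda F$ for the two blocks of $\Phi$. Your one substantive addition is also the one worth keeping: the lemma as stated ("for any $\xi_1,\xi_2\in\mathbb{R}^{r}$") is too generous, since $\lambda^{T}g(x)$ is convex in $x$ only when the multipliers of the nonlinear $g$-block are nonnegative, so both the convex-concavity and the monotonicity hold on $\mathbb{R}^{n}\times\bm\Theta$ rather than on all of $\mathbb{R}^{r}$. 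You correctly observe that this restriction is harmless for the paper because the projection $\mathcal{P}_{\bm\Theta}$ in \eqref{5.2} keeps every iterate $\lambda_{k}$ (and $\lambda^{*}$) in $\bm\Theta$, which is the only place Lemma~4 is invoked (e.g., in \eqref{t10}). In short: the proof is correct, supplies what the paper omits, and sharpens the statement's domain in a way the subsequent analysis silently relies on.
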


\begin{Lemma}[\cite{17a}]
Define $\hat{x}_{t}=\frac{1}{t}\sum^{t-1}_{k=0}x_{k+1}$ and $\hat{\lambda}_{t}=\frac{1}{t}\sum^{t-1}_{k=0}\lambda_{k+1}$. Under Assumption 2, it follows that
\begin{align}\label{lm1}
F(\hat{x}_{t},\lambda^{*})-F(x^{*},\hat{\lambda}_{t})\le \frac{1}{t} \sum^{t-1}_{k=0} \Phi(\xi_{k+1})(\xi_{k+1}-\xi^{*})
\end{align}
\end{Lemma}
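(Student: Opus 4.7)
The plan is to exploit the saddle-point structure: $F(x,\lambda)$ is convex in $x$ and concave in $\lambda$ (Lemma 3 part (i)), and the operator $\Phi(\xi)=[\nabla_{x}F(x,\lambda)^{T},\,-\nabla_{\lambda}F(x,\lambda)^{T}]^{T}$ is exactly the primal-dual gradient of $F$, since $\nabla_{x}F=\nabla f(x)+\nabla\tilde{\psi}^{T}(x)\lambda$ and $\nabla_{\lambda}F=\tilde{\psi}(x)$. Under this identification the stated bound is just a standard saddle-gap inequality followed by Jensen's inequality on the averages.

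First I would establish the per-iterate inequality
\begin{align*}
F(x_{k+1},\lambda^{*})-F(x^{*},\lambda_{k+1})\le \Phi(\xi_{k+1})^{T}(\xi_{k+1}-\xi^{*}).
\end{align*}
This follows in two steps. Convexity of $F(\cdot,\lambda_{k+1})$ at $x_{k+1}$ gives
\begin{align*}
F(x_{k+1},\lambda_{k+1})-F(x^{*},\lambda_{k+1})\le \nabla_{x}F(x_{k+1},\lambda_{k+1})^{T}(x_{k+1}-x^{*}),
\end{align*}
and concavity of $F(x_{k+1},\cdot)$ at $\lambda_{k+1}$ gives
\begin{align*}
F(x_{k+1},\lambda^{*})-F(x_{k+1},\lambda_{k+1})\le -\nabla_{\lambda}F(x_{k+1},\lambda_{k+1})^{T}(\lambda_{k+1}-\lambda^{*}).
\end{align*}
Adding the two and recognizing the right-hand side as $\Phi(\xi_{k+1})^{T}(\xi_{k+1}-\xi^{*})$ yields the per-iterate bound.

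Next I would average over $k=0,\dots,t-1$ and use Jensen/convexity on the left side. For the primal piece, convexity of $x\mapsto F(x,\lambda^{*})$ gives
\begin{align*}
F(\hat{x}_{t},\lambda^{*})\le \frac{1}{t}\sum_{k=0}^{t-1}F(x_{k+1},\lambda^{*}),
\end{align*}
and for the dual piece, concavity of $\lambda\mapsto F(x^{*},\lambda)$ gives
\begin{align*}
F(x^{*},\hat{\lambda}_{t})\ge \frac{1}{t}\sum_{k=0}^{t-1}F(x^{*},\lambda_{k+1}).
\end{align*}
Subtracting these and chaining with the averaged per-iterate bound yields \eqref{lm1}.

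There is no real obstacle here; the proof is essentially three lines once one observes the convex–concave structure of $F$ together with the identification of $\Phi$ as its saddle gradient. The only thing to be careful about is sign-bookkeeping on the $\lambda$ block (the minus sign in $\Phi$ is exactly what converts the concavity inequality into an inner product against $\lambda_{k+1}-\lambda^{*}$ instead of $\lambda^{*}-\lambda_{k+1}$).
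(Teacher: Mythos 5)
Your proof is correct, and it is the standard saddle-gap argument; note that the paper itself does not prove this lemma but imports it from \cite{17a}, so there is no in-paper proof to compare against. Your identification of $\Phi$ as the primal-dual gradient of $F$, the per-iterate inequality from convexity in $x$ and linearity (hence concavity, with equality) in $\lambda$, and the final Jensen step on the averages are all sound. The one point worth making explicit is that convexity of $x\mapsto F(x,\lambda)$ holds only for $\lambda\in\bm\Theta$ (the inequality-constraint multipliers must be nonnegative for $\lambda^{T}_{i}g_{i}(x_{i})$ to be convex, while the $h_{i}$ block is affine and unrestricted); this is guaranteed for $\lambda_{k+1}$ by the projection $\mathcal{P}_{\bm\Theta}$ in the update and for $\lambda^{*}$ by the KKT conditions, so the argument goes through, but the hypothesis should be stated when invoking the convex--concave structure.
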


\begin{Proposition}
Under the conditions of Theorem 1, for any $\xi$ and $z=[0^{T}, s^{T}]^{T}, s\in \textbf{Im}(L\otimes I_{m})$, we have that
\begin{align}\label{t2}
&(\xi_{k+1}-\xi)^{T}(\Phi(\xi_{k+1})+z) \notag\\
&\le-\frac{\kappa_{c}}{2}\Vert \xi_{k+1}-\xi_{k}\Vert^{2}-\frac{\kappa_{c}}{2}(\Vert \xi_{k+1}-\xi_{k}\Vert^{2}\notag \\
&\quad-\Vert \xi_{k} -\xi_{k-1}\Vert^{2})-\frac{1}{2}(\Vert \xi_{k+1}-\xi\Vert^{2}_{P}-\Vert \xi_{k}-\xi\Vert^{2}_{P}) \notag\\
&\quad-\frac{1}{2}\Vert s_{k+1}-s_{k}\Vert^{2}_{W}-\frac{1}{2}(\Vert s_{k+1}-s\Vert^{2}_{W}-\Vert s_{k}-s\Vert^{2}_{W}) \notag\\
&\quad-(\xi_{k+1}-\xi)^{T}\beta L_{s}(\tilde{e}_{k}-\tilde{e}_{k+1})+\tilde{e}^{T}_{k+1}(z_{k+1}-z) \notag\\
&\quad+\xi^{T}(z_{k+1}-z)-\frac{1}{\alpha}(\xi_{k+1}-\xi)^{T}Y_{k+1}+\vartheta_{k+1}
\end{align}
where $P=\frac{1}{\alpha}I_{Nm}-\beta L_{s}$ is positive definite if $\alpha<\frac{1}{3\kappa_{c}}$ and $\beta\le \frac{1-3\alpha\kappa_{c}}{\alpha\lambda_{\max}(L)}$, $W=\frac{1}{\beta}(L+\bm 1_{N}\bm 1^{T}_{N})^{-1}\otimes I_{m}$ is positive definite, and $\vartheta_{k+1}=(\xi_{k+1}-\xi)^{T}(\Phi(\xi_{k+1})-\Phi(\xi_{k}))-(\xi_{k}-\xi)^{T}(\Phi(\xi_{k})-\Phi(\xi_{k-1}))$.
\end{Proposition}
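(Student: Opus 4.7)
The plan is to start from the projection characterization of $\xi_{k+1}$ in \eqref{9a} and work the projection inequality into the claimed form. Since $\xi_{k+1}=\mathcal{P}_{\Lambda}(u_k)$ with $u_k = \xi_k-\alpha(2\Phi(\xi_k)-\Phi(\xi_{k-1}))-\alpha z_k-\alpha\beta L_s(\xi_k+\tilde{e}_k)$, the first-order projection property $(u_k-\xi_{k+1})^{T}(\xi-\xi_{k+1})\le 0$ for every $\xi\in \Lambda$, after rearrangement and division by $\alpha$, yields an upper bound on $(\xi_{k+1}-\xi)^{T}[2\Phi(\xi_k)-\Phi(\xi_{k-1})+z_k+\beta L_s(\xi_k+\tilde{e}_k)]$ by $\frac{1}{2\alpha}(\|\xi_k-\xi\|^2-\|\xi_{k+1}-\xi\|^2-\|\xi_{k+1}-\xi_k\|^2)$ plus the normal-cone residual term $-\frac{1}{\alpha}(\xi_{k+1}-\xi)^{T}Y_{k+1}$.

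Second, I would convert $2\Phi(\xi_k)-\Phi(\xi_{k-1})$ into $\Phi(\xi_{k+1})$ by adding $(\xi_{k+1}-\xi)^{T}(\Phi(\xi_{k+1})-2\Phi(\xi_k)+\Phi(\xi_{k-1}))$ to both sides. Writing this increment as $[(\xi_{k+1}-\xi)^{T}(\Phi(\xi_{k+1})-\Phi(\xi_k))-(\xi_k-\xi)^{T}(\Phi(\xi_k)-\Phi(\xi_{k-1}))]-(\xi_{k+1}-\xi_k)^{T}(\Phi(\xi_k)-\Phi(\xi_{k-1}))$ identifies the leading bracket as the telescoping quantity $\vartheta_{k+1}$, while the trailing inner product is bounded by the Lipschitz constant of $\Phi$ combined with Young's inequality, producing $\frac{\kappa_c}{2}\|\xi_{k+1}-\xi_k\|^2+\frac{\kappa_c}{2}\|\xi_k-\xi_{k-1}\|^2$.

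Third, I would fold the $z_k$-term and the $\beta L_s\xi_k$-term together using \eqref{9b}: substitute $z_k=z_{k+1}-\beta L_s(\xi_{k+1}+\tilde{e}_{k+1})$ so that the $\beta L_s\xi_{k+1}$ pieces cancel and the $\tilde{e}$-pieces collect into $\beta L_s(\tilde{e}_k-\tilde{e}_{k+1})$, producing $(\xi_{k+1}-\xi)^{T}z_{k+1}$ and the indicated error cross-terms. The remaining $-\beta(\xi_{k+1}-\xi)^{T}L_s(\xi_{k+1}-\xi_k)$ is treated with the identity $2(\xi_{k+1}-\xi)^{T}L_s(\xi_{k+1}-\xi_k)=\|\xi_{k+1}-\xi\|_{L_s}^{2}+\|\xi_{k+1}-\xi_k\|_{L_s}^{2}-\|\xi_k-\xi\|_{L_s}^{2}$ and then merged with the $\frac{1}{\alpha}$-terms to assemble the $P$-norm telescoping $-\frac{1}{2}(\|\xi_{k+1}-\xi\|_P^{2}-\|\xi_k-\xi\|_P^{2})$ with $P=\frac{1}{\alpha}I-\beta L_s$. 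The step-size conditions $\alpha<1/(3\kappa_c)$ and $\beta\le (1-3\alpha\kappa_c)/(\alpha\lambda_{\max}(L))$ imply $P\succeq 3\kappa_c I$, which both guarantees positive definiteness and, when subtracted against the $+\frac{\kappa_c}{2}\|\xi_{k+1}-\xi_k\|^2$ contribution from the Lipschitz step, produces the net coefficient $-\kappa_c$ on $\|\xi_{k+1}-\xi_k\|^2$ that is rewritten in the claim as $-\frac{\kappa_c}{2}\|\xi_{k+1}-\xi_k\|^2-\frac{\kappa_c}{2}(\|\xi_{k+1}-\xi_k\|^2-\|\xi_k-\xi_{k-1}\|^2)$.

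Finally, to introduce the $W$-norm quantities, I would use that $s_k\in\textbf{Im}(L\otimes I_m)$ by Lemma 3 (hence the same holds for $s$), together with the pseudo-inverse identity $\beta W L_s v=v$ for $v\in\textbf{Im}(L\otimes I_m)$, so that $z_{k+1}-z_k=\beta L_s(\xi_{k+1}+\tilde{e}_{k+1})$ can be dualized through $W$. Applying $2(s_{k+1}-s_k)^{T}W(s_{k+1}-s)=\|s_{k+1}-s_k\|_W^{2}+\|s_{k+1}-s\|_W^{2}-\|s_k-s\|_W^{2}$ then converts an intermediate cross-term of $(\xi_{k+1}+\tilde{e}_{k+1}-\xi)^{T}(z_{k+1}-z)$ into the pair $-\frac{1}{2}\|s_{k+1}-s_k\|_W^{2}$ and $-\frac{1}{2}(\|s_{k+1}-s\|_W^{2}-\|s_k-s\|_W^{2})$, and leaves the explicit remainders $\tilde{e}_{k+1}^{T}(z_{k+1}-z)+\xi^{T}(z_{k+1}-z)$ that appear in \eqref{t2}. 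The main obstacle I expect is precisely this last step: tracking the $W$-norm bookkeeping and the event-triggered cross-terms so that every residual lines up with the claimed form without introducing spurious terms, and verifying that the pseudo-inverse identification $W$ acts correctly on $\textbf{Im}(L\otimes I_m)$ when $s\neq s^{*}$.
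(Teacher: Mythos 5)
Your proposal is correct and follows essentially the same route as the paper: rewriting the projection step with the explicit residual $Y_{k+1}$, trading $2\Phi(\xi_k)-\Phi(\xi_{k-1})$ for $\Phi(\xi_{k+1})$ at the cost of $\vartheta_{k+1}$ plus a Lipschitz/Young term, substituting \eqref{9b} to assemble $P=\frac{1}{\alpha}I-\beta L_s$, polarizing the $P$- and $W$-norm cross terms, and using $P\succeq 3\kappa_c I$ to obtain the stated coefficients. Your pseudo-inverse identity on $\textbf{Im}(L\otimes I_m)$ is just a repackaging of the paper's Lemma~3 argument with the auxiliary variables $s'_{k}$, so no substantive difference remains.
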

\begin{proof}
Set $Y_{k+1}=\Gamma_{k}-\mathcal{P}_{\Lambda}(\Gamma_{k})$, where $\Gamma_{k}=\xi_{k}-2\alpha \Phi(\xi_{k})+\alpha\Phi(\xi_{k-1})-\alpha z_{i,k} -\alpha \beta L_{s}(\xi_{k}+\tilde{e}_{k})$. From \eqref{9a}, one has that
\begin{align}\label{t3}
\xi_{k+1}&=\xi_{k}-2\alpha \Phi(\xi_{k})+\alpha\Phi(\xi_{k-1})-\alpha z_{k} \notag\\
&\quad-\alpha \beta L_{s}(\xi_{k}+\tilde{e}_{k})-Y_{k+1} \notag\\
&=\xi_{k}-\alpha \Phi(\xi_{k+1})+\chi_{k+1}-\alpha z_{k} \notag\\
&\quad-\alpha \beta L_{s}(\xi_{k}+\tilde{e}_{k})-Y_{k+1},
\end{align}
where $\chi_{k+1}=\alpha\{(\Phi(\xi_{k+1})-\Phi(\xi_{k}))-(\Phi(\xi_{k})-\Phi(\xi_{k-1}))\}$.

Substituting \eqref{9b} into \eqref{t3}, with a simple merging operation, we obtain that
\begin{align*}
&\Phi(\xi_{k+1})+z_{k+1}+P(\xi_{k+1}-\xi_{k})+\beta L_{s}(\tilde{e}_{k}-\tilde{e}_{k+1})+\frac{1}{\alpha}Y_{k+1}-\frac{1}{\alpha}\chi_{k+1}=0. \notag
\end{align*}
It then follows that
\begin{align}\label{t4}
&(\xi_{k+1}-\xi)^{T}\Phi(\xi_{k+1})+(\xi_{k+1}-\xi)^{T}z\\
&=-(\xi_{k+1}-\xi)^{T}(z_{k+1}-z)-(\xi_{k+1}-\xi)^{T}P(\xi_{k+1}-\xi_{k}) \notag \\
&\quad-(\xi_{k+1}-\xi)^{T}(\beta L_{s}(\tilde{e}_{k}-\tilde{e}_{k+1})+\frac{1}{\alpha}(Y_{k+1}-\chi_{k+1})). \notag
\end{align}
From the last equation of \eqref{6} and the initial condition $s_{i,0}=0$, it follows that $s_{t+1}=\beta (L\otimes I_{m})\sum^{t+1}_{k=1}\tilde{\lambda}_{k}$ for any $t\ge 0$. This implies that $s_{k+1}\in \textbf{Im}(L\otimes I_{m})$ for $\forall k\ge 0$. Since $s_{k+1}, s \in \textbf{Im}(L\otimes I_{m})$, based on Lemma 3, there exit $s'_{k+1} $ and $s'$ such that $s_{k+1}-s=(L\otimes I_{m})(s'_{k+1}-s')$. Based on $(L\otimes I_{m})\tilde{\lambda}_{k+1}=\frac{1}{\beta}(s_{k+1}-s_{k})$ and $(s_{k+1}-s_{k})=(L\otimes I_{m})(s'_{k+1}-s'_{k})$, it follows that
\begin{align}\label{t5}
&\xi^{T}_{k+1}(z_{k+1}-z)=\lambda^{T}_{k+1}(s_{k+1}-s) \notag\\
&=\tilde{\lambda}^{T}_{k+1}(L\otimes I_{m})(s'_{k+1}-s')-e^{T}_{k+1}(s_{k+1}-s) \notag\\
&=(s'_{k+1}-s'_{k})\frac{1}{\beta}(L\otimes I_{m})(s'_{k+1}-s)-e^{T}_{k+1}(s_{k+1}-s),
\end{align}
According to \eqref{t4} and \eqref{t5}, we obtain that
\begin{align}\label{t6}
&~(\xi_{k+1}-\xi)^{T}\Phi(\xi_{k+1})+(\xi_{k+1}-\xi)^{T}z \notag \\
&=-(\xi_{k+1}-\xi)^{T}P(\xi_{k+1}-\xi_{k})-(\xi_{k+1}-\xi)^{T}\beta L_{s}(\tilde{e}_{k}\notag \\
&\quad-\tilde{e}_{k+1})-(s'_{k+1}-s'_{k})\frac{1}{\beta}\tilde{L}(s'_{k+1}-s)+e^{T}_{k+1}(s_{k+1}-s)\notag\\
&\quad+\xi^{T}(z_{k+1}-z)-\frac{1}{\alpha}(\xi_{k+1}-\xi)^{T}(Y_{k+1}-\chi_{k+1}) \notag\\
&=-\frac{1}{2}\Vert \xi_{k+1}-\xi_{k}\Vert^{2}_{P}-\frac{1}{2}(\Vert \xi_{k+1}-\xi\Vert^{2}_{P}-\Vert \xi_{k}-\xi\Vert^{2}_{P}) \notag \\
&\quad-\frac{1}{2}\Vert s'_{k+1}-s'_{k}\Vert^{2}_{\frac{1}{\beta}\tilde{L}}-\frac{1}{2}(\Vert s'_{k+1}-s'\Vert^{2}_{\frac{1}{\beta}\tilde{L}}-\Vert s'_{k}-s'\Vert^{2}_{\frac{1}{\beta}\tilde{L}})\notag \\
&\quad-(\xi_{k+1}-\xi)^{T}\beta L_{s}(\tilde{e}_{k}-\tilde{e}_{k+1})+e^{T}_{k+1}(s_{k+1}-s) \notag\\
&\quad+\xi^{T}(z_{k+1}-z)-\frac{1}{\alpha}(\xi_{k+1}-\xi)^{T}(Y_{k+1}-\chi_{k+1}),
\end{align}
where $\tilde{L}=L\otimes I_{m}$ and the last inequality is derived by using the fact that $u^{T}Mv=\frac{1}{2}\Vert u\Vert^{2}_{M}+\frac{1}{2}\Vert v \Vert^{2}_{M}-\frac{1}{2}\Vert u-v \Vert^{2}_{M}$ holds for any vector $u, v$ and positive semi-definite matrix $M$. In addition, it follows from the definition of $\chi_{k+1}$ that
\begin{align}\label{t7a}
&\frac{1}{\alpha}(\xi_{k+1}-\xi)^{T}\chi_{k+1}=(\xi_{k+1}-\xi)^{T}(\Phi(\xi_{k+1})-\Phi(\xi_{k})) \notag\\
&~~~-(\xi_{k+1}-\xi)^{T}(\Phi(\xi_{k})-\Phi(\xi_{k-1})) \notag \\
&=(\xi_{k+1}-\xi)^{T}(\Phi(\xi_{k+1})-\Phi(\xi_{k}))-(\xi_{k}-\xi)^{T}(\Phi(\xi_{k}) \notag \\
&~~~-\Phi(\xi_{k-1}))+(\xi_{k}-\xi_{k+1})^{T}(\Phi(\xi_{k})-\Phi(\xi_{k-1})) \notag \\
&\le (\xi_{k+1}-\xi)^{T}(\Phi(\xi_{k+1})-\Phi(\xi_{k}))-(\xi_{k}-\xi)^{T}(\Phi(\xi_{k}) \notag \\
&~~~-\Phi(\xi_{k-1}))+\frac{\kappa_{c}}{2}\Vert \xi_{k+1}-\xi_{k}\Vert^{2}+\frac{\kappa_{c}}{2}\Vert \xi_{k}-\xi_{k-1}\Vert^{2} \notag \\
&=\frac{\kappa_{c}}{2}\Vert \xi_{k+1}-\xi_{k}\Vert^{2}+\frac{\kappa_{c}}{2}\Vert \xi_{k}-\xi_{k-1}\Vert^{2}+\vartheta_{k+1},
\end{align}
where the last inequality is obtained due to the Lipschitz continuity of $\Phi(\xi)$ and Young's inequality.

Since $\alpha<\frac{1}{3\kappa_{c}}$ and $\beta\le \frac{1-3\alpha\kappa_{c}}{\alpha\lambda_{\max}(L)}$, we have that $P-3\kappa_{c}I_{Nm}$ is positive semi-definite, and one can further derive that
\begin{align}\label{t7f}
\frac{1}{2}\Vert \xi_{k+1}-\xi_{k}\Vert^{2}_{P}\ge \frac{3\kappa_{c}}{2}\Vert \xi_{k+1}-\xi_{k}\Vert^{2}.
\end{align}
It then follows that
\begin{align}\label{t7b}
&-\frac{1}{2}\Vert \xi_{k+1}-\xi_{k}\Vert^{2}_{P}+\frac{\kappa_{c}}{2}(\Vert \xi_{k+1}-\xi_{k}\Vert^{2}+\Vert \xi_{k}-\xi_{k-1}\Vert^{2})\\
&\le-\frac{\kappa_{c}}{2}\Vert \xi_{k+1}-\xi_{k}\Vert^{2}-\frac{\kappa_{c}}{2}(\Vert \xi_{k+1}-\xi_{k}\Vert^{2}-\Vert \xi_{k}-\xi_{k-1}\Vert^{2}). \notag
\end{align}
Based on \eqref{t7a} and \eqref{t7b}, Eq. \eqref{t6} is calculated as
\begin{align}\label{pm1}
&~(\xi_{k+1}-\xi)^{T}\Phi(\xi_{k+1})+(\xi_{k+1}-\xi)^{T}z \notag \\
&=-\frac{\kappa_{c}}{2}\Vert \xi_{k+1}-\xi_{k}\Vert^{2}-\frac{\kappa_{c}}{2}(\Vert \xi_{k+1}-\xi_{k}\Vert^{2}-\Vert \xi_{k}-\xi_{k-1}\Vert^{2}) \notag \\
&\quad-\frac{1}{2}(\Vert \xi_{k+1}-\xi\Vert^{2}_{P}-\Vert \xi_{k}-\xi\Vert^{2}_{P})-\frac{1}{2}\Vert s'_{k+1}-s'_{k}\Vert^{2}_{\frac{1}{\beta}\tilde{L}}\notag \\
&\quad-\frac{1}{2}(\Vert s'_{k+1}-s'\Vert^{2}_{\frac{1}{\beta}\tilde{L}}-\Vert s'_{k}-s'\Vert^{2}_{\frac{1}{\beta}\tilde{L}})\notag \\
&\quad-(\xi_{k+1}-\xi)^{T}\beta L_{s}(\tilde{e}_{k}-\tilde{e}_{k+1})+e^{T}_{k+1}(s_{k+1}-s) \notag\\
&\quad+\xi^{T}(z_{k+1}-z)-\frac{1}{\alpha}(\xi_{k+1}-\xi)^{T}Y_{k+1}+\vartheta_{k+1}.
\end{align}
By considering $s_{k+1}, s_{k}, s \in \textbf{Im}(L\otimes I_{m})$ and $\bm 1^{T}_{N}L=0$, one has that
\begin{align*}
s_{k+1}-s_{k}=\tilde{L}(s'_{k+1}-s'_{k})=(\tilde{L}+\bm 1_{N}\bm 1^{T}_{N}\otimes I_{m})(s'_{k+1}-s'_{k}),
\end{align*}
Note that $\tilde{L}+\bm 1_{N}\bm 1^{T}_{N}\otimes I_{m}=(L+\bm 1_{N}\bm 1^{T}_{N})\otimes I_{m}$ is positive definite, and one has that $s'_{k+1}-s'_{k}=((L+\bm 1_{N}\bm 1^{T}_{N})^{-1}\otimes I_{m})(s_{k+1}-s_{k})$. It then follows that
\begin{align}\label{t7}
&\Vert s'_{k+1}-s'_{k}\Vert^{2}_{\frac{1}{\beta}\tilde{L}}=\frac{1}{\beta}(s_{k+1}-s_{k})^{T}(s'_{k+1}-s'_{k}) \notag\\
&=(s_{k+1}-s_{k})^{T}\frac{1}{\beta}((L+\bm 1_{N}\bm 1^{T}_{N})^{-1}\otimes I_{m})(s_{k+1}-s_{k}) \notag \\
&=\Vert s_{k+1}-s_{k}\Vert^{2}_{W}.
\end{align}
Similarly, we can derive that $\Vert s'_{k+1}-s'\Vert^{2}_{\frac{1}{\beta}\tilde{L}}=\Vert s_{k+1}-s\Vert^{2}_{W}$ and $\Vert s'_{k}-s'\Vert^{2}_{\frac{1}{\beta}\tilde{L}}=\Vert s_{k}-s\Vert^{2}_{W}$. Also, one can derive that
\begin{align}\label{t7c}
e^{T}_{k+1}(s_{k+1}-s)=\tilde{e}^{T}_{k+1}(z_{k+1}-z_{k}).
\end{align}
Substituting \eqref{t7} and \eqref{t7c} into \eqref{pm1}, we obtain the desired inequality \eqref{t2}.
\end{proof}

\begin{Proposition}
Under the conditions of Theorem 1, $\forall k, t\in \mathbb{N}$, the following inequality holds for any $k\le t$
\begin{align}\label{t8}
&\Vert \xi_{k+1}-\xi^{*} \Vert+\Vert s_{k+1}-s^{*}\Vert \le \frac{4\gamma_{2}\sqrt{N}}{\gamma_{1}}\sum^{t-1}_{k=0}E_{k}\\
&~~~~~~+\sqrt{\frac{2}{\gamma_{1}}}(\Vert \xi_{0}-\xi^{*}\Vert_{P}+\Vert s_{0}-s^{*}\Vert_{W}). \notag
\end{align}
\end{Proposition}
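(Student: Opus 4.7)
The plan is to specialize Proposition 2 to $\xi=\xi^*$ and $z=z^*:=[0_n^T,(s^*)^T]^T$, with $s^*\in\textbf{Im}(L\otimes I_m)$ the fixed-point component guaranteed by Proposition 1, and then convert the resulting per-step estimate into the claimed a priori bound via a Polyak-type lemma. The first key observation is that the LHS of \eqref{t2} is nonnegative with these choices. Indeed, splitting $(\xi_{k+1}-\xi^*)^T(\Phi(\xi_{k+1})+z^*)=(\xi_{k+1}-\xi^*)^T(\Phi(\xi_{k+1})-\Phi(\xi^*))+(\xi_{k+1}-\xi^*)^T(\Phi(\xi^*)+z^*)$, the first summand is $\ge 0$ by monotonicity of $\Phi$ (Lemma 4), and the second is the saddle-point variational inequality at $\xi^*$: its $x$-block follows from the normal-cone inclusion \eqref{2a.1} together with $x_{k+1}\in\Omega$, while its $\lambda$-block uses $\tilde\psi(x^*)-s^*\in\bm\Upsilon$, $\lambda_{k+1}\in\bm\Theta$ and the complementary slackness $(\lambda^*)^T(\tilde\psi(x^*)-s^*)=0$. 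In parallel, $Y_{k+1}\in\mathcal N_\Lambda(\xi_{k+1})$ gives $-\tfrac{1}{\alpha}(\xi_{k+1}-\xi^*)^TY_{k+1}\le 0$, and $(\xi^*)^T(z_{k+1}-z^*)=(\lambda^*)^T(s_{k+1}-s^*)=0$ because $\lambda^*=\mathbf 1_N\otimes\lambda_0^*$, $s_{k+1}-s^*\in\textbf{Im}(L\otimes I_m)$, and $\mathbf 1_N^TL=0$.

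Setting $V_k:=\Vert\xi_k-\xi^*\Vert_P^2+\Vert s_k-s^*\Vert_W^2$, \eqref{t2} then collapses to
\begin{equation*}
\tfrac{1}{2}(V_{k+1}-V_k)+\tfrac{\kappa_c}{2}\Vert\xi_{k+1}-\xi_k\Vert^2+\tfrac{\kappa_c}{2}(\Vert\xi_{k+1}-\xi_k\Vert^2-\Vert\xi_k-\xi_{k-1}\Vert^2)+\tfrac{1}{2}\Vert s_{k+1}-s_k\Vert_W^2\le \vartheta_{k+1}+R_k,
\end{equation*}
where $R_k:=-(\xi_{k+1}-\xi^*)^T\beta L_s(\tilde e_k-\tilde e_{k+1})+\tilde e_{k+1}^T(z_{k+1}-z^*)$ collects the two event-triggered error contributions. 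To absorb the indefinite term $\vartheta_{k+1}=\phi_{k+1}-\phi_k$ with $\phi_k:=(\xi_k-\xi^*)^T(\Phi(\xi_k)-\Phi(\xi_{k-1}))$ (so $\phi_0=0$ from the initialization $\xi_{-1}=\xi_0$), I would introduce the modified Lyapunov $M_k:=\tfrac{1}{2}V_k-\phi_k+\tfrac{\kappa_c}{2}\Vert\xi_k-\xi_{k-1}\Vert^2$, which satisfies the clean recursion $M_{k+1}\le M_k+R_k$. The step-size conditions $\alpha<1/(3\kappa_c)$ and $\beta\le(1-3\alpha\kappa_c)/(\alpha\lambda_{\max}(L))$ force $\lambda_{\min}(P)\ge 3\kappa_c$, so Young's inequality applied to $|\phi_k|\le\kappa_c\Vert\xi_k-\xi^*\Vert\,\Vert\xi_k-\xi_{k-1}\Vert$ absorbs $\phi_k$ into fractions of $\Vert\xi_k-\xi^*\Vert_P^2$ and the memory term, giving an equivalence $c_1V_k\le M_k$ for some explicit $c_1>0$.

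For the linearization, I would bound $|R_k|$ via Cauchy-Schwarz together with $\Vert e_j\Vert\le\sqrt N E_j$, the monotonicity of $E_k$ (Assumption 3), and the definition $\gamma_2=\max(2\beta\lambda_{\max}(L),1)$ to get $|R_k|\le\gamma_2\sqrt N E_k(\Vert\xi_{k+1}-\xi^*\Vert+\Vert s_{k+1}-s^*\Vert)$. Since $\lambda_{\min}(P),\lambda_{\min}(W)\ge\gamma_1$, one has $V_{k+1}\ge\tfrac{\gamma_1}{2}(\Vert\xi_{k+1}-\xi^*\Vert+\Vert s_{k+1}-s^*\Vert)^2$, hence $|R_k|\le c_2 E_k\sqrt{M_{k+1}}$ with $c_2\propto\gamma_2\sqrt{N/\gamma_1}$. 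Completing the square on $M_{k+1}\le M_k+c_2E_k\sqrt{M_{k+1}}$ yields $\sqrt{M_{k+1}}\le\sqrt{M_k}+c_2E_k$, which telescopes to $\sqrt{M_{k+1}}\le\sqrt{M_0}+c_2\sum_{j=0}^{k}E_j$; since $M_0=V_0/2$ and $\sqrt{V_0}\le\Vert\xi_0-\xi^*\Vert_P+\Vert s_0-s^*\Vert_W$, translating back through $\Vert\xi_{k+1}-\xi^*\Vert+\Vert s_{k+1}-s^*\Vert\le\sqrt{2/\gamma_1}\sqrt{V_{k+1}}\le\sqrt{2/(c_1\gamma_1)}\sqrt{M_{k+1}}$ and carefully tracking constants produces exactly \eqref{t8}.

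The hard part is the modified Lyapunov $M_k$: the sign-indefinite memory cross term $\phi_k$ coming from the optimistic-gradient update is only controllable through the quadratic increments $\Vert\xi_k-\xi_{k-1}\Vert^2$ together with the sharp spectral estimate $\lambda_{\min}(P)\ge 3\kappa_c$, which is the precise threshold secured by the step-size conditions of Theorem 1; any looser spectral bound would wreck the equivalence $c_1V_k\le M_k$ and with it the Polyak linearization. Matching the exact prefactor $4\gamma_2\sqrt N/\gamma_1$ (rather than a larger multiple) also requires careful bookkeeping of the constants through the Young-type estimates and the completing-the-square step.
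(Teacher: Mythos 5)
Your proposal is correct and reaches the stated bound with the exact constants, but it resolves the final implicit inequality by a genuinely different mechanism than the paper. Both arguments share the same skeleton: specialize \eqref{t2} to $\xi=\xi^{*}$, $z=z^{*}$, use monotonicity of $\Phi$ plus the normal-cone inclusion at the saddle point to make the left-hand side nonnegative, drop $(\xi^{*})^{T}(z_{k+1}-z^{*})=0$ and $-\frac{1}{\alpha}(\xi_{k+1}-\xi^{*})^{T}Y_{k+1}\le 0$, control the cross term from the optimistic-gradient momentum via the spectral estimate $\lambda_{\min}(P)\ge 3\kappa_{c}$ (the paper's \eqref{t7f}), and bound the event-triggered residual by $\gamma_{2}\sqrt{N}E_{k}(\Vert\xi_{k+1}-\xi^{*}\Vert+\Vert s_{k+1}-s^{*}\Vert)$. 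The paper then \emph{sums} \eqref{t2} over $k$ from $0$ to $t-1$, telescopes, arrives at the quadratic-versus-linear inequality \eqref{t13} of the form $u_{t}^{2}\le C+\sum_{k}a_{k}u_{k+1}$, and invokes an external sequence lemma (Lemma 1 in [17]) to extract the linear bound. You instead keep the estimate per-step, package it into the Lyapunov quantity $M_{k}=\frac{1}{2}V_{k}-\phi_{k}+\frac{\kappa_{c}}{2}\Vert\xi_{k}-\xi_{k-1}\Vert^{2}$ (which is precisely the $\Delta_{k}$ the paper only introduces later, in the proof of Theorem 1, where it derives the same recursion $\Delta_{k+1}\le\Delta_{k}+\varrho_{k}$), show $M_{k}\ge\kappa_{c}\Vert\xi_{k}-\xi^{*}\Vert^{2}+\frac{1}{2}\Vert s_{k}-s^{*}\Vert_{W}^{2}\ge\frac{\gamma_{1}}{4}(\Vert\xi_{k}-\xi^{*}\Vert+\Vert s_{k}-s^{*}\Vert)^{2}$, and complete the square in $\sqrt{M_{k+1}}$ to telescope $\sqrt{M_{k+1}}\le\sqrt{M_{k}}+c_{2}E_{k}$; with $c_{2}=2\gamma_{2}\sqrt{N}/\sqrt{\gamma_{1}}$ and $M_{0}=\frac{1}{2}V_{0}$ this reproduces the prefactors $4\gamma_{2}\sqrt{N}/\gamma_{1}$ and $\sqrt{2/\gamma_{1}}$ in \eqref{t8} exactly. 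What your route buys is self-containedness (no appeal to the lemma from [17]) and a single Lyapunov object that also serves the convergence proof of Theorem 1; what the paper's route buys is that it never needs the lower bound $M_{k}\ge c_{1}V_{k}$ at intermediate iterates, only at the terminal index $t$. One small presentational slip: you route the coercivity through $V_{k+1}\ge\frac{\gamma_{1}}{2}(\cdot)^{2}$ and an equivalence $c_{1}V_{k}\le M_{k}$, whereas the clean path (and the one your constants implicitly use) is the direct bound $M_{k+1}\ge\frac{\gamma_{1}}{4}(\Vert\xi_{k+1}-\xi^{*}\Vert+\Vert s_{k+1}-s^{*}\Vert)^{2}$; this does not affect correctness.
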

\begin{proof}
Define $\xi^{*}=[(x^{*})^{T}, (\lambda^{*})^{T}]^{T}$ and $z^{*}=[0^{T},(s^{*})^{T}]^{T}$. From \eqref{7} and \eqref{9}, we have that $-(\Phi(\xi^{*})+z^{*}) \in \mathcal{N}_{\Lambda}(\xi^{*})$. Since $\xi_{k+1} \in \Lambda$, it follows that
\begin{align}\label{t9}
(\Phi(\xi^{*})+z^{*})^{T}(\xi_{k+1}-\xi^{*})\ge 0.
\end{align}
According to the monotone property of $\Phi(\xi)$ given in Lemma 4, one has that
\begin{align}\label{t10}
(\xi_{k+1}-\xi^{*})^{T}(\Phi(\xi_{k+1})-\Phi(\xi^{*}))\ge 0.
\end{align}
By combining \eqref{t9} and \eqref{t10}, we can derive that
\begin{align}\label{t10a}
&(\xi_{k+1}-\xi^{*})^{T}(\Phi(\xi_{k+1})+z^{*})=(\xi_{k+1}-\xi^{*})^{T}(\Phi(\xi_{k+1}) \notag\\
&\quad-\Phi(\xi^{*}))+(\xi_{k+1}-\xi^{*})^{T}(\Phi(\xi^{*})+z^{*})\ge 0.
\end{align}
Setting $\xi=\xi^{*}$ and $z=z^{*}$ of \eqref{t2}, and then summing it over $k$ from $0$ to $t-1$ yields
\begin{align}\label{t11}
&0 \le \sum^{t-1}_{k=0} (\xi_{k+1}-\xi^{*})^{T}(\Phi(\xi_{k+1})+z^{*})\notag \\
&\le-\frac{\kappa_{c}}{2}\sum^{t-1}_{k=0}\Vert \xi_{k+1}-\xi_{k}\Vert^{2}-\frac{\kappa_{c}}{2}\Vert \xi_{t}-\xi_{t-1}\Vert^{2} \notag \\
&\quad-\frac{1}{2}(\Vert \xi_{t}-\xi^{*}\Vert^{2}_{P}-\Vert \xi_{0}-\xi^{*}\Vert^{2}_{P})-\frac{1}{2}\sum^{t-1}_{k=0}\Vert s_{k+1}-s_{k}\Vert^{2}_{W} \notag\\
&\quad-\frac{1}{2}(\Vert s_{t}-s^{*}\Vert^{2}_{W}-\Vert s_{0}-s^{*}\Vert^{2}_{W}) \notag\\
&\quad+\sum^{t-1}_{k=0}\{(\xi^{*}-\xi_{k+1})^{T}\beta L_{s}(\tilde{e}_{k}-\tilde{e}_{k+1})+(z_{k+1}-z^{*})^{T}\tilde{e}_{k+1}\} \notag\\
&\quad+\sum^{t-1}_{k=0}\{(\xi^{*})^{T}(z_{k+1}-z^{*})-\frac{1}{\alpha}(\xi_{k+1}-\xi^{*})^{T}Y_{k+1}\} \notag\\
&\quad+(\xi_{t}-\xi^{*})^{T}(\Phi(\xi_{t})-\Phi(\xi_{t-1})),
\end{align}
where the first inequality is derived by \eqref{t10a}, and the second inequality is obtained by using $\sum^{t-1}_{k=0}-\frac{\kappa_{c}}{2}(\Vert \xi_{k+1}-\xi_{k}\Vert^{2}-\Vert \xi_{k} -\xi_{k-1}\Vert^{2})=-\frac{\kappa_{c}}{2}(\Vert \xi_{t}-\xi_{t-1}\Vert^{2}-\Vert \xi_{0} -\xi_{-1}\Vert^{2})=-\frac{\kappa_{c}}{2}\Vert \xi_{t}-\xi_{t-1}\Vert^{2}$ since $\xi_{0}=\xi_{-1}$, and $\sum^{t-1}_{k=0}\vartheta_{k+1}=\sum^{t-1}_{k=0}(\xi_{k+1}-\xi^{*})^{T}(\Phi(\xi_{k+1})-\Phi(\xi_{k}))-(\xi_{k}-\xi^{*})^{T}(\Phi(\xi_{k})-\Phi(\xi_{k-1}))=(\xi_{t}-\xi^{*})^{T}(\Phi(\xi_{t})-\Phi(\xi_{t-1}))$.

Note that $(\xi^{*})^{T}(z_{k+1}-z^{*})=(\lambda^{*})^{T}(s_{k+1}-s^{*})$.  Since $s_{k+1}, s^{*} \in \textbf{Im}(L\otimes I_{m})$ and $(L\otimes I_{m})\lambda^{*}=0$, we obtain that $(\xi^{*})^{T}(z_{k+1}-z^{*})=0$ for $\forall k\ge 0$. According to the definition $Y_{k+1}$ in Proposition 2, one has that $\mathcal{P}_{\Lambda}(\xi_{k+1}+Y_{k+1})=\xi_{k+1}$. This implies that $Y_{k+1}\in \mathcal{N}_{\Lambda}(\xi_{k+1})$, and one further can derive that $(\xi_{k+1}-\xi^{*})^{T}Y_{k+1} \ge 0$. The following inequality holds that for $\forall k\ge 0$
\begin{align}\label{t12y}
(\xi^{*})^{T}(z_{k+1}-z^{*})-\frac{1}{\alpha}(\xi_{k+1}-\xi^{*})^{T}Y_{k+1}\le 0.
\end{align}
In addition, since $(\xi_{t}-\xi^{*})^{T}(\Phi(\xi_{t})-\Phi(\xi_{t-1}))\le \frac{\kappa_{c}}{2}\Vert \xi_{t}-\xi^{*}\Vert^{2}+\frac{\kappa_{c}}{2} \Vert \xi_{t}-\xi_{t-1}\Vert^{2}$ and $\frac{1}{2}\Vert \xi_{t}-\xi^{*}\Vert^{2}_{P} \ge \frac{3\kappa_{c}}{2}\Vert \xi_{t}-\xi^{*}\Vert^{2}$, we obtain that
\begin{align}\label{t12a}
&-\frac{1}{2}\Vert \xi_{t}-\xi^{*}\Vert^{2}_{P}-\frac{\kappa_{c}}{2}\Vert \xi_{t}-\xi_{t-1}\Vert^{2}+(\xi_{t}-\xi^{*})^{T}(\Phi(\xi_{t}) \notag\\
&~~~~~~~-\Phi(\xi_{t-1}))\le -\kappa_{c}\Vert \xi_{t}-\xi^{*}\Vert^{2}.
\end{align}
Based on \eqref{t11}, \eqref{t12y} and \eqref{t12a}, we have that
\begin{align}\label{t12}
&\kappa_{c}\Vert \xi_{t}-\xi^{*}\Vert^{2}+\frac{1}{2}\Vert s_{t}-s^{*}\Vert^{2}_{W} \notag \\
\le &\frac{1}{2}(\Vert \xi_{0}-\xi^{*}\Vert^{2}_{P}+\Vert s_{0}-s^{*}\Vert^{2}_{W})+\sum^{t-1}_{k=0}\{(\xi^{*}-\xi_{k+1})^{T} \notag \\
&\beta L_{s}(\tilde{e}_{k}-\tilde{e}_{k+1})+(z_{k+1}-z^{*})^{T}\tilde{e}_{k+1}\}.
\end{align}
Since $\Vert e_{i,k}\Vert \le \varepsilon_{i,k}$ and $E_{k}=\max_{i\in \mathcal{I}_{N}}(\varepsilon_{i,k})$, we obtain that $\Vert \tilde{e}_{k}\Vert=\Vert e_{k}\Vert\le \sqrt{N}E_{k}$ for $\forall k\ge 0$. According to the nonincreasing of $E_{k}$ and Cauchy-Schwarz inequality, it follows from \eqref{t12} that
\begin{align}\label{t13}
&\frac{1}{4}\gamma_{1}(\Vert \xi_{t}-\xi^{*} \Vert+\Vert s_{t}-s^{*}\Vert)^{2} \notag\\
&\le \kappa_{c}\Vert \xi_{t}-\xi^{*}\Vert^{2}+\frac{1}{2}\Vert s_{t}-s^{*}\Vert^{2}_{W} \notag\\
&\le \frac{1}{2}(\Vert \xi_{0}-\xi^{*}\Vert^{2}_{P}+\Vert s_{0}-s^{*}\Vert^{2}_{W})  \\
&~~~~+\sum^{t-1}_{k=0}\gamma_{2} \sqrt{N}E_{k}(\Vert \xi_{k+1}-\xi^{*}\Vert+\Vert s_{k+1}-s^{*} \Vert), \notag
\end{align}
where $\gamma_{1}=\min(2\kappa_{c}, \lambda_{\min}(W))$ and $\gamma_{2}=\max(2\beta\lambda_{\max}(L),$ $1)$. By using Lemma 1 in \cite{17}, it follows from \eqref{t13} that
\begin{align*}
&\Vert \xi_{t}-\xi^{*} \Vert+\Vert s_{t}-s^{*}\Vert\\
&\le \frac{2\gamma_{2}\sqrt{N}}{\gamma_{1}}\sum^{t-1}_{k=0}E_{k}+\{\frac{2}{\gamma_{1}}(\Vert \xi_{0}-\xi^{*}\Vert^{2}_{P}+\Vert s_{0}-s^{*}\Vert^{2}_{W})\\
&\quad +(\frac{2\gamma_{2}\sqrt{N}}{\gamma_{1}}\sum^{t-1}_{k=0}E_{k})^{2}\}^{1/2}\\
&\le \frac{4\gamma_{2}\sqrt{N}}{\gamma_{1}}\sum^{t-1}_{k=0}E_{k}+\sqrt{\frac{2}{\gamma_{1}}}(\Vert \xi_{0}-\xi^{*}\Vert_{P}+\Vert s_{0}-s^{*}\Vert_{W}),
\end{align*}
where the last inequality is derived by using $\sqrt{\Vert a\Vert^{2}+\Vert b\Vert^{2}}\le \Vert a\Vert+\Vert b\Vert$ for $\forall a,b$. Due to the monotone increasing of $\sum^{t-1}_{k=0}E_{k}$, we obtain the inequality \eqref{t8}.
\end{proof}
Based on the results of Lemmas 2-5 and Propositions 2-3, we next give the proofs of Theorems 1 and 2.
\vspace{-0.2cm}
\subsection{Proof of Theorem 1}
According to \eqref{t11} and \eqref{t12a}, we obtain that
\begin{align*}
&0\le -\frac{\kappa_{c}}{2}\sum^{t-1}_{k=0}\Vert \xi_{k+1}-\xi_{k}\Vert^{2}-\frac{\kappa_{c}}{2}\Vert \xi_{t}-\xi^{*}\Vert^{2} \\
&+\frac{1}{2}\Vert \xi_{0}-\xi^{*}\Vert^{2}_{P}-\frac{1}{2}\sum^{t-1}_{k=0}\Vert s_{k+1}-s_{k}\Vert^{2}_{W} \notag\\
&-\frac{1}{2}(\Vert s_{t}-s^{*}\Vert^{2}_{W}-\Vert s_{0}-s^{*}\Vert^{2}_{W})+\sum^{t-1}_{k=0}\{(\xi^{*}-\xi_{k+1})^{T} \notag \\
&\beta L_{s}(\tilde{e}_{k}-\tilde{e}_{k+1})+(z_{k+1}-z^{*})^{T}\tilde{e}_{k+1}\}.
\end{align*}
It then follows that
\begin{align*}
&\frac{1}{2}\sum^{t-1}_{k=0}(\kappa_{c}\Vert \xi_{k+1}-\xi_{k}\Vert^{2}+\Vert s_{k+1}-s_{k}\Vert^{2}_{W})\\
&\le  \frac{1}{2}(\Vert \xi_{0}-\xi^{*}\Vert^{2}_{P}+\Vert s_{0}-s^{*}\Vert^{2}_{W})\\
&~~~~+\sum^{t-1}_{k=0}\gamma_{2} \sqrt{N}E_{k}(\Vert \xi_{k+1}-\xi^{*}\Vert+\Vert s_{k+1}-s^{*} \Vert).  \notag
\end{align*}
Based on Proposition 3, we obtain that
\begin{align*}
&\sum^{t-1}_{k=0}\gamma_{2} \sqrt{N}E_{k}(\Vert \xi_{k+1}-\xi^{*}\Vert+\Vert s_{k+1}-s^{*} \Vert)\\
&~~\le (\frac{4\gamma_{2}\sqrt{N}}{\gamma_{1}}\sum^{t-1}_{k=0}E_{k}+\sqrt{\frac{2}{\gamma_{1}}}(\Vert \xi_{0}-\xi^{*}\Vert_{P}\\
&~~+\Vert s_{0}-s^{*}\Vert_{W})) \sum^{t-1}_{k=0}\gamma_{2} \sqrt{N}E_{k}.
\end{align*}
Under Assumption 3 that $\sum^{\infty}_{k=0}E_{k}<\infty$, we obtain that $\sum^{t-1}_{k=0}\gamma_{2} \sqrt{N}E_{k}(\Vert \xi_{k+1}-\xi^{*}\Vert+\Vert s_{k+1}-s^{*} \Vert)< \infty$ for any $\forall t\ge 0$. Then, by letting $t \to \infty$, it follows that
\begin{align*}
\sum^{\infty}_{k=0}(\kappa_{c}\Vert \xi_{k+1}-\xi_{k}\Vert^{2}+\Vert s_{k+1}-s_{k}\Vert^{2}_{W})< \infty.
\end{align*}
%
Thus, we derive that $\lim_{k \to \infty}(\xi_{k+1}-\xi_{k})=0$ and $\lim_{k \to \infty}(s_{k+1}-s_{k})=0$. According to \eqref{t8}, one has that $(\xi_{k}, s_{k})$ is bounded for $\forall k\in \mathbb{N}$, and then we obtain that $\{\xi_{k}\}$ and $\{s_{k}\}$ contain the convergent subsequences $\{\xi_{n_{k}}\}$ and $\{s_{n_{k}}\}$ to some limit points $\xi^{0}$ and $s^{0}$, i.e., $\lim_{k\to \infty} \xi_{n_{k}}=\xi^{0}=[(x^{0})^{T}, (\lambda^{0})^{T}]^{T}$ and $\lim_{k\to \infty} s_{n_{k}}=s^{0}$. Since $\lim_{k\to\infty} E_{k}=0$, it implies that $\lim_{k\to \infty}\tilde{e}_{k}=0$. From \eqref{9}, it follows that
\begin{align*}
&\xi^{0}=\mathcal{P}_{\Lambda}(\xi^{0}-\alpha \Phi(\xi^{0})-\alpha z^{0}-\alpha \beta L_{s}\xi^{0}),\\
&L_{s}\xi^{0}=0,
\end{align*}
where $z^{0}=[0^{T}, (s^{0})^{T}]^{T}$. This implies that $(\xi^{0}, z^{0})$ is a fixed point of \eqref{9}. According to Proposition 1, we know that $x^{0}$ is an optimal solution of problem \eqref{1}.

We next prove the primal sequences $\{\xi_{k}\}$ and $\{s_{k}\}$ are convergent. Setting $\xi=\xi^{*}$ and $z=z^{*}$ of \eqref{t2}, one has that
\begin{align}
&0\le (\xi_{k+1}-\xi^{*})^{T}(\Phi(\xi_{k+1})+z^{*}) \notag\\
&\le -\frac{1}{2}(\Vert \xi_{k+1}-\xi^{*}\Vert^{2}_{P}-\Vert \xi_{k}-\xi^{*}\Vert^{2}_{P})-\frac{1}{2}(\Vert s_{k+1}-s^{*}\Vert^{2}_{W} \notag \\
&\quad-\Vert s_{k}-s^{*}\Vert^{2}_{W})-\frac{\kappa_{c}}{2}(\Vert \xi_{k+1} -\xi_{k}\Vert^{2}-\Vert \xi_{k} -\xi_{k-1}\Vert^{2}) \notag \\
&\quad+\gamma_{2} \sqrt{N}E_{k}(\Vert \xi_{k+1}-\xi^{*}\Vert+\Vert s_{k+1}-s^{*}\Vert) \notag \\
&\quad+(\xi_{k+1}-\xi^{*})^{T}(\Phi(\xi_{k+1})-\Phi(\xi_{k}))\notag \\
&\quad-(\xi_{k}-\xi^{*})^{T}(\Phi(\xi_{k})-\Phi(\xi_{k-1})). \notag
\end{align}
It then follows that
\begin{align}
&\frac{1}{2}\Vert \xi_{k+1}-\xi^{*}\Vert^{2}_{P}+\frac{1}{2} \Vert s_{k+1}-s^{*}\Vert^{2}_{W} \notag \\
&\quad-(\xi_{k+1}-\xi^{*})^{T}(\Phi(\xi_{k+1})-\Phi(\xi_{k}))+\frac{\kappa_{c}}{2}\Vert \xi_{k+1} -\xi_{k}\Vert^{2} \notag \\
&\le \frac{1}{2}\Vert \xi_{k}-\xi^{*}\Vert^{2}_{P}+\frac{1}{2}\Vert s_{k}-s^{*}\Vert^{2}_{W}   \notag \\
&\quad-(\xi_{k}-\xi^{*})^{T}(\Phi(\xi_{k})-\Phi(\xi_{k-1}))+\frac{\kappa_{c}}{2}\Vert \xi_{k} -\xi_{k}\Vert^{2} \notag \\
&\quad+\gamma_{2} \sqrt{N}E_{k}(\Vert \xi_{k+1}-\xi^{*}\Vert+\Vert s_{k+1}-s^{*}\Vert). \label{thm1}
\end{align}
Define the variable $\Delta_{k+1}=\frac{1}{2}\Vert \xi_{k+1}-\xi^{*}\Vert^{2}_{P}+\frac{1}{2}\Vert s_{k+1}-s^{*}\Vert^{2}_{W}-(\xi_{k+1}-\xi^{*})^{T}(\Phi(\xi_{k+1})-\Phi(\xi_{k}))+\frac{\kappa_{c}}{2}\Vert \xi_{k+1}-\xi_{k}\Vert^{2}$ and $\varrho_{k}=\gamma_{2} \sqrt{N}E_{k}(\Vert \xi_{k+1}-\xi^{*}\Vert+\Vert s_{k+1}-s^{*}\Vert)$. The above equation \eqref{thm1} is simplified as
\begin{align}\label{thm2}
\Delta_{k+1}\le \Delta_{k}+\varrho_{k}.
\end{align}
By using \eqref{t7f} and Young inequality, one has that $\Delta_{k+1}\ge \kappa_{c}\Vert \xi_{k+1}-\xi^{*}\Vert^{2}+\frac{1}{2}\Vert s_{k+1}-s^{*}\Vert^{2}_{W} \ge 0$ for $\forall k\ge0$. In addition, from Assumption 3 and the boundedness of $\Vert \xi_{k+1}-\xi^{*}\Vert+\Vert s_{k+1}-s^{*}\Vert$, we have that $\varrho_{k}\ge 0$ and $\sum^{\infty}_{k=1} \varrho_{k}<\infty$. According to Lemma 2, we obtain that $\Delta_{k+1}$ is convergent. Since $\lim_{k \to \infty}(\xi_{k+1}-\xi_{k})=0$ and the Lipshitz condition $\Vert \Phi(\xi_{k+1})-\Phi(\xi_{k})\Vert\le \kappa_{c} \Vert \xi_{k+1}-\xi_{k}\Vert$, it implies that $\frac{1}{2}\Vert \xi_{k+1}-\xi^{*}\Vert^{2}_{P}+\frac{1}{2}\Vert s_{k+1}-s^{*}\Vert^{2}_{W}$ is convergent. By setting $(\xi^{*}, z^{*})=(\xi^{0}, z^{0})$ and using $\lim_{k\to \infty} \xi_{n_{k}}=\xi^{0}$ and $\lim_{k\to \infty} s_{n_{k}}=s^{0}$, it follows that
\begin{align}
\lim_{k\to \infty} \frac{1}{2}\Vert \xi_{k+1}-\xi^{*}\Vert^{2}_{P}+\frac{1}{2}\Vert s_{k+1}-s^{*}\Vert^{2}_{W}=0.
\end{align}
Thus, we obtain that $\lim_{k\to \infty} x_{k}=x^{*}$, $\lim_{k\to \infty}\lambda_{k}=\lambda^{*}$ and $\lim_{k\to\infty} s_{k}=s^{*}$.

\vspace{-0.2cm}
\subsection{Proof of Theorem 2}
Setting $\xi=\xi^{*}$ of \eqref{t2} and summing it over $k$ from $0$ to $t-1$, for any $z=[0^{T}, s^{T}]^{T}, s\in \textbf{Im}(L\otimes I_{m})$, it follows that
\begin{align}\label{t14}
& \sum^{t-1}_{k=0}(\xi_{k+1}-\xi^{*})^{T}\Phi(\xi_{k+1})+(\xi_{k+1}-\xi^{*})^{T}z  \notag \\
&\le  \frac{1}{2}(\Vert \xi_{0}-\xi^{*}\Vert^{2}_{P}+\Vert s_{0}-s\Vert^{2})\\
&~~~~+\sum^{t-1}_{k=0}\gamma_{2} \sqrt{N}E_{k}(\Vert \xi_{k+1}-\xi^{*}\Vert+\Vert s_{k+1}-s \Vert), \notag
\end{align}
where the above inequality can be derived from \eqref{t11}, \eqref{t12y} and \eqref{t12a}. In addition, according to Lemma 2.2 in \cite{18}, the following inequality holds for any $\rho >0$
\begin{align}\label{t16}
&\sum^{t-1}_{k=0}(\xi_{k+1}-\xi^{*})^{T}\Phi(\xi_{k+1}) \notag \\
&\le \sum^{t-1}_{k=0}\{(\xi_{k+1}-\xi^{*})^{T}\Phi(\xi_{k+1})+\rho\Vert \lambda_{k+1}-\lambda^{*}\Vert \} \notag\\
& \le \text{sup}_{\Vert s\Vert \le \rho} \{\frac{1}{2}(\Vert \xi_{0}-\xi^{*}\Vert^{2}_{P}+\Vert s\Vert^{2})  \notag\\
&\quad+\sum^{t-1}_{k=0}\gamma_{2} \sqrt{N}E_{k}(\Vert \xi_{k+1}-\xi^{*}\Vert+\Vert s_{k+1}-s \Vert)\},
\end{align}
where $s_{0}=0$ for initialization is used to obtain the last inequality. Based on $\Vert s_{k+1}-s\Vert \le \Vert s_{k+1}-s^{*}\Vert+\Vert s^{*}-s\Vert$ and by using \eqref{t8} of Proposition 3, one has that
\begin{align} \label{t17}
&\sum^{t-1}_{k=0}\gamma_{2} \sqrt{N}E_{k}(\Vert \xi_{k+1}-\xi^{*}\Vert+\Vert s_{k+1}-s \Vert) \\
&\le (\frac{4\gamma_{2}\sqrt{N}}{\gamma_{1}}\sum^{t-1}_{k=0}E_{k}+\sqrt{\frac{2}{\gamma_{1}}}(\Vert \xi_{0}-\xi^{*}\Vert_{P} \notag \\
&~~+\Vert s_{0}-s^{*}\Vert_{W})) \sum^{t-1}_{k=0}\gamma_{2} \sqrt{N}E_{k}+ \Vert s^{*}-s \Vert\sum^{t-1}_{k=0}\gamma_{2} \sqrt{N}E_{k}. \notag
\end{align}
Substituting \eqref{t17} into \eqref{t16} yields
\begin{align*}
\sum^{t-1}_{k=0}(\xi_{k+1}-\xi^{*})^{T}\Phi(\xi_{k+1})\le \frac{1}{2}(\Vert \xi_{0}-\xi^{*}\Vert^{2}_{P}+\rho^{2}+2\Sigma_{t}).
\end{align*}
where $\Sigma_{t}$ is defined in Theorem 2.

By using Lemma 5, it follows that
\begin{align}
&F(\hat{x}_{t},\lambda^{*})-F(x^{*},\hat{\lambda}_{t})\le \frac{1}{t}\sum^{t-1}_{k=0}(\xi_{k+1}-\xi^{*})^{T}\Phi(\xi_{k+1}) \notag\\
& \le \frac{1}{2t}(\Vert \xi_{0}-\xi^{*}\Vert^{2}_{P}+\rho^{2}+2\Sigma_{t}).
\end{align}
Note that $F(\hat{x}_{t},\lambda^{*})-F(x^{*},\hat{\lambda}_{t})=F(\hat{x}_{t},\lambda^{*})-F(x^{*},\lambda^{*})+ F(x^{*}, \lambda^{*})-F(x^{*},\hat{\lambda}_{t})$. According to the convexity and concavity properties of $F(x,\lambda)$, we easily obtain that
\begin{align*}
&0 \le F(\hat{x}_{t},\lambda^{*})-F(x^{*},\lambda^{*})\le  \frac{1}{2t}(\Vert \xi_{0}-\xi^{*}\Vert^{2}_{P}+\rho^{2}+2\Sigma_{t}),\\
&0 \le F(x^{*}, \lambda^{*})-F(x^{*},\hat{\lambda}_{t})\le  \frac{1}{2t}(\Vert \xi_{0}-\xi^{*}\Vert^{2}_{P}+\rho^{2}+2\Sigma_{t}).
\end{align*}
In addition, since $F(\hat{x}_{t},\hat{\lambda}_{t})\le F(\hat{x}_{t},\lambda^{*})$ and $F(\hat{x}_{t},\hat{\lambda}_{t})\ge F(x^{*},\hat{\lambda}_{t})$, we further derive that
\begin{align*}
&F(\hat{x}_{t},\hat{\lambda}_{t})-F(x^{*},\lambda^{*}) \le F(\hat{x}_{t},\lambda^{*})-F(x^{*},\lambda^{*}) \\
&~~~~~~~~\le  \frac{1}{2t}(\Vert \xi_{0}-\xi^{*}\Vert^{2}_{P}+\rho^{2}+2\Sigma_{t}),\\
&F(x^{*}, \lambda^{*})-F(\hat{x}_{t},\hat{\lambda}_{t})\le F(x^{*}, \lambda^{*})-F(x^{*},\hat{\lambda}_{t})\\
&~~~~~~~~\le  \frac{1}{2t}(\Vert \xi_{0}-\xi^{*}\Vert^{2}_{P}+\rho^{2}+2\Sigma_{t}).
\end{align*}
This implies that
\begin{align*}
\vert F(\hat{x}_{t},\hat{\lambda}_{t})-F(x^{*},\lambda^{*}) \vert \le \frac{1}{2t}(\Vert \xi_{0}-\xi^{*}\Vert^{2}_{P}+\rho^{2}+2\Sigma_{t}).
\end{align*}
Thus, the desired inequality \eqref{th1} is obtained.

\bibliographystyle{elsart-harv}
\bibliography{refs}

%

%
%
%




\end{document}